\documentclass[12pt]{amsart}

\usepackage{amsaddr}
\usepackage[utf8]{inputenc}
\usepackage{mathrsfs}

\usepackage{xcolor}
\usepackage[pagebackref,colorlinks,linkcolor=blue,citecolor=orange,urlcolor=blue,hypertexnames=true]{hyperref}
\usepackage[capitalize,nameinlink]{cleveref}

\usepackage{color, bm, amscd, tikz-cd, amssymb}
\usepackage{color, bm, amscd, tikz-cd}

\usepackage{enumitem}
\usepackage{charter}
\usepackage{newtxmath}

\usepackage{euflag}
\newcommand{\cB}{{\mathcal B}}
\newcommand{\cD}{{\mathcal D}}
\newcommand{\cE}{{\mathcal E}}
\newcommand{\cF}{{\mathcal F}}
\newcommand{\cH}{{\mathcal H}}
\newcommand{\cM}{{\mathcal M}}

\newcommand{\la}{\langle}
\newcommand{\ra}{\rangle}

\newcommand{\bfM}{\textit{\textbf{M}}}
\newcommand{\bfT}{\textit{\textbf{T}}}

\newcommand{\bfZ}{\textit{\textbf{Z}}}

\newcommand{\bydef}{\stackrel{\rm def}{=}}

\newtheorem{theorem}{Theorem}[section]
\newtheorem{lemma}[theorem]{Lemma}

\theoremstyle{definition}
\newtheorem{definition}[theorem]{Definition}

\newtheorem{proposition}[theorem]{Proposition}

\theoremstyle{remark}

\numberwithin{equation}{section}

\newtheorem{theoremAinner}{\textbf{Theorem A}}

\newtheorem{theoremBinner}{\textbf{Theorem B}}

\crefname{theoremAinner}{\textbf{Theorem A}}{\textbf{Theorems A}}
\Crefname{theoremAinner}{\textbf{Theorem A}}{\textbf{Theorems A}}

\crefname{theoremBinner}{\textbf{Theorem B}}{\textbf{Theorems B}}
\Crefname{theoremBinner}{\textbf{Theorem B}}{\textbf{Theorems B}}

\crefname{theoremCinner}{\textbf{Theorem C}}{\textbf{Theorems C}}
\Crefname{theoremCinner}{\textbf{Theorem C}}{\textbf{Theorems C}}

\crefname{corollaryCinner}{\textbf{Corollary C}}{\textbf{Corollaries C}}
\Crefname{corollaryCinner}{\textbf{Corollary C}}{\textbf{Corollaries C}}

\thanks{Both the authors were supported by the Slovenian Research Agency program P1-0222 and grant J1-50002. This work was performed
within the project COMPUTE, funded within the QuantERA II Programme, which has
received funding from the European Union's Horizon 2020 research and innovation
programme under Grant Agreement No.~101017733 {\normalsize\protect\euflag}.}

\begin{document}

\title[Taylor Joint Spectrum]{Spectra of $1/k$-Contractions via Characteristic Functions}

\author[Jindal and Kumar]{Abhay Jindal and Poornendu Kumar}

\address{Faculty of Mathematics and Physics, University of Ljubljana, Slovenia\\
\href{mailto:abhay.jindal@fmf.uni-lj.si}{abhay.jindal@fmf.uni-lj.si, poornendu.kumar@fmf.uni-lj.si}}

\bigskip

\dedicatory{
Dedicated to Professor Tirthankar Bhattacharyya}

\subjclass[2020]{47A13, 47A10, 46E22, 47A45}
\keywords{Taylor joint spectra, $1/k$-contractions, inner multipliers, Characteristic functions, Complete Nevanlinna-Pick kernels}

\maketitle

\begin{abstract}
A unitarily invariant complete Nevanlinna--Pick (CNP) kernel $k$ on the Euclidean ball gives rise to a natural class of operator tuples on Hilbert spaces, known as $1/k$-contractions. We establish a lower estimate for the Taylor joint spectrum of $1/k$-contractions with finite defect in terms of the associated characteristic function. Under the additional assumption that $k$ satisfies the Corona property, this lower estimate coincides with an upper estimate due to Clou\^atre--Timko [Adv.~Math., 2023], yielding an exact characterization of the Taylor joint spectrum in terms of the characteristic function. As an application, we determine the Taylor joint spectrum of quotient modules of CNP spaces. A key ingredient in the proof is a Beurling--Lax--Halmos theorem for these spaces, established in terms of characteristic functions. 

\end{abstract}

\maketitle

\section{Introduction}
One of the central themes in multivariable operator theory is the study of the joint spectrum of commuting operator tuples. It has attracted considerable attention and has been investigated from a variety of perspectives in both the classical literature \cite{Curto, Mullar, Taylor, Vas} and more recent works; see, for instance, \cite{ABLS, BRK, BKS3, CT, DEHS, Gleason} and the references therein. In this paper, we study the joint spectrum of a class of commuting operator tuples from the perspective of model theory.

\smallskip

In the single-variable setting, one of the classical frameworks is the Sz.-Nagy--Foiaș theory of contractions, which concerns bounded operators $T$ on a Hilbert space satisfying $\|T\|\le 1$. They introduced an operator-valued bounded analytic function, known as the characteristic function $\theta_T$ of a contraction, and used it to obtain a precise description of the spectrum of a completely non-unitary contraction $T$. More precisely, they \cite{NF} showed that
\begin{align}\label{spectra_char}
\sigma(T) \,=\, \{\lambda \in \mathbb{D} : \theta_T(\lambda)\ \text{is not boundedly invertible}\}\cup \mathcal S_{\theta_T}, 
\end{align}
where $\mathcal S_{\theta_T}$ is the set of boundary points $\lambda \in \mathbb T$ which do not lie on an open arc of $\mathbb T$ across which $\theta_{T}$ extends analytically as a unitary operator-valued function and $\sigma(T)$ is the spectrum of $T$. 

\smallskip

There is another approach through weak-$*$ closed ideals. Given a contraction $T$, let $\operatorname{Ann}(T)$ denote its annihilator in $H^\infty(\mathbb D)$, the Banach algebra of bounded holomorphic functions on $\mathbb D$. If $\operatorname{Ann}(T)$ is nontrivial, then it is a weak-$*$ closed ideal of $H^\infty(\mathbb D)$. As a consequence of Beurling’s theorem, $\operatorname{Ann}(T)$ is singly generated by an inner function $\theta$ on $\mathbb{D}$. Sz.-Nagy and Foiaș \cite{NF} established a fundamental connection between the so-called the support of the inner function $\theta$ and the spectrum of $T$. More precisely, they proved that
\begin{align}\label{spcctra_ann}
\sigma(T)=\operatorname{supp}(\theta).
\end{align}
Recall that a contraction \(T\) on a Hilbert space is naturally associated
with the Szegő kernel $
s(z,w) \,=\, \frac{1}{1-z\overline w}$
on \(\mathbb D\), in the sense that
\[
\left(\frac{1}{s}\right)(T,T^*)=I-T  T^{*} \geq 0
\]
is precisely the contractivity condition. Motivated by this observation, one may try to attach analogous positivity
conditions to more general reproducing kernels. This leads to the notion of a
\(1/k\)-contraction, which has been studied in various forms; see for instance
\cite{Agler82, AEM02, AE, MV}.  CNP kernels form a particularly important class in this
setting. Examples of CNP kernels include the Drury--Arveson kernel and the Dirichlet kernel. CNP kernels arise naturally from the Nevanlinna--Pick interpolation problem \cite{AM02, Q93}
and have been studied extensively from function-theoretic, operator-theoretic,
and operator-algebraic perspectives. When \(k\) is the
Drury--Arveson kernel, the notion of $1/k$-contraction reduces to that of a commuting row contraction. For a unitarily invariant CNP kernel $k$, the class of $1/k$-contractions has been studied extensively in recent years; see \cite{BJ, CH, CT, PRZ} and the references therein.
We discuss in detail in Subsection \ref{subsection_Char} about $1/k$-contraction. In this note, we continue the study of these tuples and their spectral properties. 

\smallskip

In the multivariable setting, there are several notions of joint spectrum for commuting operator tuples $\bfT = (T_{1}, \dots, T_{d})$. Among them, the Taylor joint spectrum, denoted by $\sigma(\bfT)$ and introduced by Taylor \cite{Taylor}, is perhaps the most prominent due to its rich holomorphic functional calculus \cite{Taylor2}; a brief review is provided in Subsection \ref{subsection_Taylor}. More recently, Clou\^atre and Timko \cite{CT} studied multivariable extensions of \eqref{spcctra_ann}. Subsequently, Didas, Eschmeier, Hartz, and  Scherer  \cite{DEHS} obtained an extension of \eqref{spectra_char} for certain commuting row contractions. See also \cite{BT}, where an analogue of \eqref{spectra_char} was established inside the Euclidean unit ball $\mathbb{B}_d$. Subsequently, analogous extensions of \eqref{spcctra_ann} for various classes of commuting tuples of contractions were obtained in \cite{BKS3, CK}. However, a complete description of the Taylor joint spectrum of \(1/k\)-contractions is still not known. In this paper, we focus on the first approach and obtain a description of the Taylor joint spectrum for certain \(1/k\)-contractions, thereby extending \eqref{spectra_char} to this setting.

\smallskip
The characteristic function for \(1/k\)-contractions on \(\mathbb{B}_d\) has recently been developed in \cite{BJ}; we will discuss it briefly in \cref{subsection_Char}. Since the Taylor joint spectrum of a \(1/k\)-contraction is contained in \(\overline{\mathbb{B}}_d\) (see \cite[Lemma 5.3]{CH}), a natural first step is to extend the characteristic function to the boundary.
In the case where $k$ is the Drury--Arveson kernel, the characteristic function admits a holomorphic extension to a neighborhood of the complement of the spectrum; see \cite[Section 4]{DEHS}. A key feature of the Drury--Arveson setting is that $1/k$ is a polynomial, which plays a crucial role in defining the characteristic function on the boundary. 

\smallskip

For a general CNP kernel, however, $1/k$ need not be a polynomial, and this presents a first obstacle to extending the characteristic function to the boundary. Thus, we first extend the characteristic function continuously to the complement of the Taylor joint spectrum of the tuple in the closed unit ball. This is established in Proposition \ref{prop:continuous-extension-characteristic-function}. This extension allows us to relate the Taylor joint spectrum of such a tuple to its associated characteristic function. We first show that, when the defect operators have finite rank, the Taylor joint spectrum contains all the points at which the characteristic function fails to be surjective or does not extend; see \cref{theorem_tylor_lower}. Next, we prove that if the CNP kernel $k$ has the {\em corona property}, then equality holds in \cref{theorem_tylor_lower}. These results are summarized in Theorem A below. Before proceeding, we introduce the following notation: Given an inner multiplier $
\theta : \mathbb{B}_d \to \mathcal{B}(\mathcal{F},\mathcal{E})$ (in the sense of McCullough--Trent \cite{MT}),
where $\mathcal{E}$ and $\mathcal{F}$ are Hilbert spaces, we define
\[
E_{\theta} = \Big\{\, \bm\lambda \in \overline{\mathbb{B}}_d \ :\  
\lim_{\substack{\bm z \to \bm\lambda\\ \bm z \in \mathbb{B}_d}} \theta(\bm z)
\ \text{exists and is not surjective} \Big\}, \text{ and } 
\]
\[
F_{\theta} = \Big\{\, \bm\lambda \in \overline{\mathbb{B}}_d \ :\  
\lim_{\substack{\bm z \to \bm\lambda\\ \bm z \in \mathbb{B}_d}} \theta(\bm z)
\ \text{does not exist} \Big\}.
\]

\begin{theoremAinner}\label{thmA}
Let $k$ be a regular unitarily invariant CNP kernel on $\mathbb{B}_d$, and let ${\bfT}=(T_1,\dots,T_d)$ be a pure $1/k$-contraction. Let $
\theta_{{\bfT}} : \mathbb{B}_d \to \mathcal{B}\big(\mathcal{D}_{\tilde{\bfT}}, \overline{\rm Ran}{\Delta}_{\bfT}\big)$
be the characteristic function of ${\bfT}$. Then the following hold:
\begin{enumerate}
    \item The characteristic function $\theta_{\bfT}$ extends continuously to $
    \overline{\mathbb{B}}_d \setminus \sigma({\bfT}).$

    \item If $\dim(\mathrm{Ran}\,\Delta_{\bfT}) < \infty$, then
    \[
    \sigma({\bfT}) \,\supset\, E_{\theta_{\bfT}} \cup F_{\theta_{\bfT}}.
    \]

    \item Moreover, if $\dim(\mathrm{Ran}\,\Delta_{\bfT}) < \infty$ and $k$ has the corona property, then
    \[
    \sigma({\bfT}) \,=\, E_{\theta_{\bfT}} \cup F_{\theta_{\bfT}}.
    \]
\end{enumerate}
\end{theoremAinner}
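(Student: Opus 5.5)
The proof goes through the functional model. Since $\bfT$ is pure, the model of \cite{BJ} gives a unitary identification of $\bfT$ with the compression of $\bfM_z\otimes I$ on $H_k\otimes\overline{\rm Ran}\Delta_{\bfT}$ to the co-invariant subspace $\mathcal H_{\bfT}=(H_k\otimes \overline{\rm Ran}\Delta_{\bfT})\ominus \theta_{\bfT}(H_k\otimes\cD_{\tilde\bfT})$. Here $\theta_{\bfT}$ is inner, so $M_{\theta_{\bfT}}$ is a partial isometry, and the Beurling--Lax--Halmos theorem for CNP spaces identifies the invariant subspace with ${\rm Ran}\,M_{\theta_{\bfT}}$.

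\emph{Step (1).} I would write $\theta_{\bfT}$ through the standard formula
\[
\theta_{\bfT}(\bm z)=\big(-\text{const}+\Delta_{\bfT}\,k\text{-type resolvent}\big),
\]
that is, as a finite combination of $\Delta_{\bfT}$, $\tilde\bfT$ and factors of the form $(I-\bm z\bfT^*)^{-1}$, or more precisely the kernel expression $\Delta_{\bfT}\,k(\bm z,\bfT)^{\ast}$-type functional calculus. For $\bm\lambda\in\overline{\mathbb B}_d\setminus\sigma(\bfT)$ the point lies outside the Taylor spectrum. The holomorphic functional calculus of Taylor then lets me define the relevant resolvent-type functions $\bm z\mapsto F(\bm z,\bfT)$ on a neighborhood of $\bm\lambda$ intersected with $\overline{\mathbb B}_d$. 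The one difficulty is that $1/k$ is not a polynomial, so I would treat $1/k(\bm z,\bm w)=1-\sum a_n\langle \bm z,\bm w\rangle^n$ via its power series. Regularity, meaning $a_n\ge 0$ with suitable summability, gives norm convergence of the series evaluated at $\bfT$ uniformly near $\bm\lambda$, and hence a continuous extension. This is where Proposition \ref{prop:continuous-extension-characteristic-function} is used, and I take it as given.

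\emph{Step (2).} Item (1) shows immediately that $F_{\theta_{\bfT}}\subset\sigma(\bfT)$. For $E_{\theta_{\bfT}}$, take $\bm\lambda\notin\sigma(\bfT)$; I need to show that $\theta_{\bfT}(\bm\lambda)$ is surjective. Let $e\in\overline{\rm Ran}\Delta_{\bfT}$ be orthogonal to ${\rm Ran}\,\theta_{\bfT}(\bm\lambda)$. For $\bm z\in\mathbb B_d$ near $\bm\lambda$ I would use the kernel vectors $k_{\bm z}\otimes e$. Their projections onto $\mathcal H_{\bfT}$ are eigenvectors of $\bfT^*$, since $P_{\mathcal H}(k_{\bm z}\otimes e)$ satisfies $T_j^*x=\overline{z_j}x$. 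By the partial-isometry property,
\[
\|P_{\mathcal H}(k_{\bm z}\otimes e)\|^2=k(\bm z,\bm z)\big(\|e\|^2-\|\theta_{\bfT}(\bm z)^*e\|^2\big).
\]
Since $\bm\lambda\notin\sigma(\bfT)$, the tuple $\bfT-\bm\lambda$ is Taylor-invertible, so in particular $\sum_j (T_j-\lambda_j)(T_j-\lambda_j)^*\ge c>0$, and therefore $\|(\bfT-\bm z)^* x\|\ge c'\|x\|$ for $\bm z$ near $\bm\lambda$. This forces $P_{\mathcal H}(k_{\bm z}\otimes e)=0$, so $\theta_{\bfT}(\bm z)^*$ is isometric on $e$ for all $\bm z$ near $\bm\lambda$ in the ball. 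In the limit $\|\theta_{\bfT}(\bm\lambda)^*e\|=\|e\|$, while by choice of $e$ we have $\theta_{\bfT}(\bm\lambda)^*e=0$, so $e=0$. Finite dimensionality of $\overline{\rm Ran}\Delta_{\bfT}$ is what makes orthogonality to the range equivalent to non-surjectivity, since the range is then closed and the cokernel finite-dimensional. If $\bm\lambda$ is an interior point, it is simpler: $k_{\bm\lambda}\otimes e$ itself lies in $\mathcal H_{\bfT}$ and is a joint eigenvector of $\bfT^*$, so $\bm\lambda\in\sigma(\bfT)$.

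\emph{Step (3).} For the reverse inclusion, take $\bm\lambda\notin E\cup F$. Then $\theta_{\bfT}$ extends continuously at $\bm\lambda$ with surjective value. In finite dimensions this gives a bounded right inverse of $\theta_{\bfT}(\bm z)$, uniformly for $\bm z$ in a ball neighborhood $U$ of $\bm\lambda$, by choosing a surjective square subblock. I then aim to show that $\bfT-\bm\lambda$ is invertible in the Taylor sense by producing the Koszul-type solution $\sum_j(M_{z_j}-\lambda_j)\,g_j$. Specifically, I would look for multipliers $g_1,\dots,g_d,h$ satisfying
\[
\sum_j (z_j-\lambda_j)g_j+\theta_{\bfT}h=I.
\]
With $\theta_{\bfT}$ boundedly right-invertible near $\bm\lambda$ and $|\bm z-\bm\lambda|$ bounded below away from $\bm\lambda$, the corona data $\{z_j-\lambda_j\}\cup\{\text{entries of }\theta_{\bfT}\}$ satisfy the corona lower bound, and the corona property of $k$ supplies the $g_j,h$ in the multiplier algebra. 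Compressing to $\mathcal H_{\bfT}$ kills the $\theta_{\bfT}h$ term and shows that $\bm\lambda$ is not in the joint spectrum of the commutative algebra generated by $\bfT$. This in turn contains $\sigma(\bfT)$; this is the upper estimate of Clouâtre--Timko \cite{CT}, which I would invoke.

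I expect the main obstacle to be the uniform lower bound in Step (3) for the matrix corona data near boundary points. I would handle it by compactness: away from $\bm\lambda$ one uses $|\bm z-\bm\lambda|$, and near $\bm\lambda$ one uses the surjectivity of $\theta_{\bfT}(\bm\lambda)$ together with continuity.
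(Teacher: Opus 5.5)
The first genuine gap is in the boundary case of Step (2); your interior-point argument is fine. Step (1) is acceptable only because you defer to Proposition~\ref{prop:continuous-extension-characteristic-function}. Your sketch would not give it: norm convergence yields continuity, but not the invertibility of $I-\bfZ\tilde\bfT^*$ at boundary points off $\sigma(\bfT)$, which the paper gets from Gelfand theory and $\sum b_n\le 1$. Also, $1/k=1-\sum b_n\la\bm z,\bm w\ra^n$, not $a_n$.

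In Step (2), the claim that $P_{\mathcal H}(k_{\bm z}\otimes e)$ is a joint eigenvector of $\bfT^*$ for $\overline{\bm z}$ is false, because $P_{\mathcal H}$ does not commute with $M_{z_j}^*$. For $v=k_{\bm z}\otimes e$ one has
\[
(T_j^*-\overline{z_j})\,P_{\mathcal H}v\,=\,-P_{\mathcal H}M_{z_j}^*P_{\mathcal H^\perp}v .
\]
Already for the Szeg\H{o} kernel and $T=0$ on $\mathbb C$ (so $\operatorname{Ran}M_{\theta_T}=zH^2$ and $\mathcal H$ is the constants), $P_{\mathcal H}k_z=1$ but $T^*1=0$. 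So nothing forces $P_{\mathcal H}(k_{\bm z}\otimes e)=0$, and ``$\theta_{\bfT}(\bm z)^*$ is isometric on $e$'' fails in general.

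Your idea can be rescued quantitatively. Since $\|P_{\mathcal H^\perp}v\|=k(\bm z,\bm z)^{1/2}\|\theta_{\bfT}(\bm z)^*e\|$, the vectors $x_{\bm z}=k(\bm z,\bm z)^{-1/2}P_{\mathcal H}v$ satisfy $\|x_{\bm z}\|^2=\|e\|^2-\|\theta_{\bfT}(\bm z)^*e\|^2$ and $\|(T_j-z_j)^*x_{\bm z}\|\le\|M_{z_j}\|\,\|\theta_{\bfT}(\bm z)^*e\|$. Combined with your lower bound for $(\bfT-\bm z)^*$ near $\bm\lambda$, this bounds $\theta_{\bfT}(\bm z)^*$ below uniformly, so $\widehat\theta_{\bfT}(\bm\lambda)$ is onto. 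This version uses purity and the model, but not $\dim{\rm Ran}\,\Delta_{\bfT}<\infty$. The paper instead argues without the model. It computes $D_{\tilde\bfT}\widehat\theta_{\bfT}(\bm z)^*=(\bfZ^*-\tilde\bfT^*)(I-\tilde\bfT\bfZ^*)^{-1}\Delta_{\bfT}$, notes that a kernel vector of $\bfZ^*-\tilde\bfT^*$ is a joint eigenvector of $\bfT^*$ for $\overline{\bm z}$ (as $b_1>0$), and uses finite dimensionality to pass from injectivity of $\widehat\theta_{\bfT}(\bm z)^*$ to surjectivity.

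Step (3) follows the paper's plan (model plus corona), but it has a second genuine gap, at the compression. Your corona solutions $g_j$ are matrix-valued, and $M_{g_j}$ need not leave $\operatorname{Ran}M_{\theta_{\bfT}}$ invariant. So $G_j=P_{\mathcal H}M_{g_j}|_{\mathcal H}$ need not commute with $\bfT$, and $\sum_j(T_j-\lambda_j)G_j=I$ only shows that the last map of the Koszul complex is onto, not that $\bm\lambda\notin\sigma(\bfT)$. Using the scalar entries of $\theta_{\bfT}$ as scalar corona data does not help: their compressions commute with $\bfT$ but do not vanish, since $P_{\mathcal H}$ kills only $M_{\theta_{\bfT}}$ as a whole.

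The missing idea is a \emph{scalar} multiplier $f$ with $f\cdot(H_k\otimes\overline{\rm Ran}\,\Delta_{\bfT})\subseteq\operatorname{Ran}M_{\theta_{\bfT}}$ and $f(\bm z)\to 1$ as $\bm z\to\bm\lambda$. This is Lemma~\ref{lem:determinant-ideal-CNP}. Choose $e_1,\dots,e_N$ with $\widehat\theta_{\bfT}(\bm\lambda)e_j=d_j$ for an orthonormal basis $d_1,\dots,d_N$ of $\overline{\rm Ran}\,\Delta_{\bfT}$, let $\Theta$ be the resulting square block, and set $f=\det\Theta$. The identity $\Theta\,\mathrm{adj}(\Theta)=fI_N$ gives the inclusion, so the compression of $M_f\otimes I$ to $\mathcal H$ is $0$. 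The scalar corona property applied to $z_1-\lambda_1,\dots,z_d-\lambda_d,f$ then gives scalar $g_j,h$ whose compressions lie in the commutant of $\bfT$. Hence $\sum_j(T_j-\lambda_j)g_j(\bfT)=I$, which yields exactness of the Koszul complex. This $f$ is exactly the input that \cite[Corollary 3.14]{CT} needs in Lemma~\ref{lemma_upper}; without it, your appeal to Clou\^atre--Timko has nothing to act on.
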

The inclusion ``$\subset$'' in~(3) follows from a result of Clou\^atre--Timko \cite{CT} together with Lemma~\ref{lem:determinant-ideal-CNP}, we record this consequence in Lemma~\ref{lemma_upper}. In the special case where $k$ is the Drury--Arveson kernel, the above theorem was proved in \cite{DEHS}. Furthermore, the corona property is known to hold for a number of other CNP kernels, including the Dirichlet space \cite{Luo} and certain Besov--Sobolev spaces \cite{CSW}.

\smallskip

The next natural question is to determine the Taylor joint spectrum of quotient modules over vector-valued CNP spaces. 
% To address this problem, one first needs to understand the \(\bfM_{\bm z}\)-invariant subspaces.
A notable result of McCullough and Trent \cite{MT} shows that every \(M_z\)-invariant subspace arises as the range of an inner multiplier. On the other hand, the characteristic function associated with a pure \(1/k\)-contraction is also inner. This naturally leads to the question of understanding the relationship between inner multipliers and characteristic functions.  

To address this question, we first show that every inner multiplier can be realized as the characteristic function of a suitable pure \(1/k\)-contraction; see \cref{theorem_main2}. The proof hinges on understanding the structure of the smallest reducing subspace that contains \(\mathcal{M}^\perp\), with \(\mathcal{M}\) an \(\bfM_{\bm z}\)-invariant subspace. In the classical setting, as well as for the Drury--Arveson space (see for example, \cite[Theorem 5.7]{Arveson}), this follows from the fact that $
\overline{\operatorname{span}}
\left\{
\bfM_{\bm z}^\alpha (\bfM_{\bm z}^*)^\beta
:\;
\alpha,\beta \in \mathbb Z_+^d
\right\}$
forms a \(C^*\)-algebra, see \cite[Theorem 5.7]{Arveson}. Consequently, the smallest reducing subspace containing \(\mathcal{M}^\perp\) is given by $
\overline{\operatorname{span}}
\left\{
\bfM_{\bm z}^\alpha(\mathcal{M}^\perp)
:\;
\alpha\in\mathbb Z_+^d
\right\}.$
However, for general CNP spaces, it is not known whether the above operator space forms a \(C^*\)-algebra. Thus, a different approach is required. Therefore, we adopt a different approach and establish the result in Proposition~\ref{lem:smallest reducing subspace}. Combining this with \cref{thmA}, we determine the Taylor joint spectra of quotient modules over CNP spaces. This yields our second main result.

\begin{theoremBinner}\label{thmB}
Let $k$ be a regular unitary invariant CNP kernel on $\mathbb{B}_{d}.$ Let $\cD$ be a finite-dimensional Hilbert space, $\cM \subseteq H_{k} \otimes \cD$  be a closed subspace invariant under $M_{z_{i}} \otimes I_{\mathcal D}$  on $H_{k} \otimes \cD$ for all $i =1, \ldots, d,$ and $\theta \in {\rm Mult}(H_{k} \otimes \cE, H_{k} \otimes \cD)$ an inner multiplier with $\cM = \theta (H_{k} \otimes \cE).$ Then
\[
\sigma (\bfM_{\bm z} \otimes I_{\cD}, (H_{k} \otimes \cD)/\cM ) \supset E_{\theta} \,\cup\, F_{\theta}.\]
\end{theoremBinner}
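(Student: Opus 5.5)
The plan is to reduce Theorem B to part (2) of \cref{thmA}. Everything is designed so that the compression of $\bfM_{\bm z}\otimes I_{\cD}$ to $\cM^\perp$ becomes a pure $1/k$-contraction whose characteristic function coincides with $\theta$, up to the trivial modifications that leave $E_\theta$ and $F_\theta$ unchanged.

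\textbf{Step 1: pass to a compression.} Identify the quotient module $(H_k\otimes\cD)/\cM$ with $\cM^\perp$. Under this identification the tuple becomes $\bfT=P_{\cM^\perp}(\bfM_{\bm z}\otimes I_{\cD})|_{\cM^\perp}$. Since $\cM^\perp$ is co-invariant, $\bfT^*$ is the restriction of $(\bfM_{\bm z}\otimes I_\cD)^*$ to $\cM^\perp$. Consequently $\bfT$ is a pure $1/k$-contraction, because purity and the positivity of $(1/k)(\bfT,\bfT^*)$ pass to co-invariant restrictions. Its defect satisfies $\Delta_{\bfT}^2=P_{\cM^\perp}(E_0\otimes I_\cD)|_{\cM^\perp}$, so $\dim \mathrm{Ran}\,\Delta_{\bfT}\le \dim\cD<\infty$.

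\textbf{Step 2: identify the characteristic function.} Here I would invoke the realization result \cref{theorem_main2}, together with the structure of the smallest reducing subspace from Proposition~\ref{lem:smallest reducing subspace}. These should show that $\theta$ coincides with $\theta_{\bfT}$ in the following sense: there are unitary (or isometric) operators $U,V$ with
\[
\theta(\bm z)=\begin{pmatrix} U\theta_{\bfT}(\bm z)V & 0\\ 0 & \theta_0\end{pmatrix},
\]
where $\theta_0$ is a constant unitary or zero part coming from a reducing summand. Concretely, let $\cD_0\subseteq\cD$ be the smallest subspace such that $H_k\otimes\cD_0$ contains the smallest reducing subspace generated by $\cM^\perp$. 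Then $\cM$ splits as $(\cM\cap (H_k\otimes\cD_0))\oplus (H_k\otimes\cD_0^\perp)$. Moreover $\overline{\mathrm{Ran}}\,\Delta_{\bfT}$ identifies with $\cD_0$ via $P_{\cM^\perp}(1\otimes\xi)\mapsto \xi$. On the $\cD_0$ part, the McCullough--Trent uniqueness of inner multipliers up to a partial isometry on the right identifies $\theta$ with $\theta_{\bfT}$.

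\textbf{Step 3: compare the exceptional sets.} Multiplying by constant unitaries or partial isometries, and adding a direct summand that is a constant coisometry onto $\cD_0^\perp$, changes neither the existence of boundary limits nor their surjectivity. The reason is that the limit of the block-diagonal function exists iff the limit of the nontrivial block exists, and surjectivity of the block-diagonal limit is equivalent to surjectivity of that block. Hence $E_\theta\cup F_\theta=E_{\theta_{\bfT}}\cup F_{\theta_{\bfT}}$, and \cref{thmA}(2) gives the inclusion. Unitary equivalence of the quotient module with $\bfT$ preserves the Taylor spectrum.

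\textbf{Main obstacle.} I expect the main difficulty to be Step 2, specifically handling the kernel of $\theta$. An arbitrary inner multiplier with $\cM=\theta(H_k\otimes\cE)$ is a partial isometry as a multiplication operator, and $\cE$ may contain a large "null" part on which $\theta$ vanishes. To deal with this, I would first use McCullough--Trent to write $\theta=\theta'W$, with $\theta'$ having trivial kernel on constants and $W$ a constant partial isometry. Only $\theta'$ is compared with $\theta_{\bfT}$. One then checks that the limits of $\theta$ and of $\theta'$ have the same existence and range at each boundary point.
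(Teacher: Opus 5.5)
Your proposal is correct and follows the paper's argument. You identify the quotient module with the pure compression $\bfT$ of $\bfM_{\bm z}\otimes I_{\cD}$ to $\cM^\perp$, whose defect has rank at most $\dim\cD$. You then use \cref{theorem_main2} to get $\theta(\bm z)=(I_{\cD_0^\perp}\oplus\tau\theta_{\bfT}(\bm z))V^*$ and $I_{\cD_0^\perp}\oplus\tau\theta_{\bfT}(\bm z)=\theta(\bm z)V$ for a constant partial isometry $V$, deduce $E_\theta\cup F_\theta=E_{\theta_{\bfT}}\cup F_{\theta_{\bfT}}$, and apply \cref{theorem_tylor_lower}.

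Two small corrections. First, the identification of $\overline{\mathrm{Ran}}\,\Delta_{\bfT}$ with $\cD_0$ that makes the range comparison work is the unitary $\tau$ from the polar decomposition $(E_0\otimes I_{\cD})|_{\cM^\perp}=\tau\Delta_{\bfT}$. It is not the map $P_{\cM^\perp}(1\otimes\xi)\mapsto\xi$, which is only a linear isomorphism. Second, Step~3 needs this two-sided relation with $V$ rather than a literal block-diagonal form, since right multiplication by an arbitrary partial isometry can destroy surjectivity.
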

 
Our work is motivated by the recent works of Clou\^atre--Timko \cite{CT} and Didas--Eschmeier--Hartz--Scherer \cite{DEHS}. We extend these results to the setting of CNP spaces, made possible by the recent development of characteristic functions in the CNP spaces in \cite{BJ}.

\section{Preliminaries}
	\subsection{Taylor Joint Spectrum:}\label{subsection_Taylor}

We briefly recall the Taylor joint spectrum introduced by Taylor \cite{Taylor}. Let \(E\) denote the exterior algebra generated by \(d\) indeterminates \(e_1,\ldots,e_d\) together with the identity element \(e_0 \equiv 1\). Thus, \(E\) consists of arbitrary linear combinations of products of the \(e_i\)'s subject to the anti-commutation relations
$
e_i e_j = - e_j e_i, \qquad 1 \leq i,j \leq d.$
For each \(i\), define the creation operator \(C_i : E \to E\) by $C_i \xi = e_i \xi$ with $\xi \in E.$
Declaring the set $
\{e_{i_1}\cdots e_{i_l} : 1 \leq i_1 < \cdots < i_l \leq d\}$ to be an orthonormal basis endows \(E\) with a Hilbert space structure. Moreover, \(E\) admits the orthogonal decomposition $
E = \bigoplus_{l=0}^{d} E_l,$ where \(E_l\) consists of all linear combinations of products involving exactly \(l\) elements from \(\{e_1,\ldots,e_d\}\).

Let $
\mathbf{T}=(T_1,\ldots,T_d)$ be a commuting \(d\)-tuple of bounded operators on a normed space \(X\). Consider the space $
E(X):=X\otimes E,$ and define the operator $
\Lambda_{\mathbf{T}}:X\otimes E\to X\otimes E$ by
\[
\Lambda_{\mathbf{T}}
=
\sum_{i=1}^{d} T_i\otimes C_i.
\]
The decomposition $
E=\bigoplus_{l=0}^{d} E_l$ induces the decomposition $
E(X)=\bigoplus_{l=0}^{d} E_l(X)$ where $E_l(X):=X\otimes E_l,$
and gives rise to the Koszul complex associated with \(\mathbf{T}\):
\[
K(\mathbf{T},X):
0
\longrightarrow
E_0(X)
\xrightarrow{\Lambda_0}
E_1(X)
\xrightarrow{\Lambda_1}
\cdots
\xrightarrow{\Lambda_{d-1}}
E_d(X)
\longrightarrow
0,
\]
where \(\Lambda_k\) denotes the restriction of \(\Lambda_{\mathbf{T}}\) to \(E_l(X)\). The Taylor joint spectrum of \(\mathbf{T}\) is defined by
\[
\sigma(\mathbf{T})
=
\left\{
\lambda\in\mathbb{C}^d:
K(\mathbf{T}-\lambda,X)
\text{ is not exact}
\right\}.
\]
It follows from \cite[Proposition 25.3 and Theorem 25.4]{Mullar} that the Taylor joint spectrum $\sigma(\bfT)$ is a compact subset of $\mathbb{C}^d$. We shall also require several basic properties of the Taylor joint spectrum. Denote by $\sigma_p(\bfT)$ the (possibly empty) joint point spectrum of $\bfT$, that is, the set of all joint eigenvalues. It follows from \cite[Remark 25.2]{Mullar} that
\begin{equation}\label{equation_Tylor2}
\sigma_p(\bfT)\subset \sigma(\bfT).
\end{equation}
If $\bfT^*=(T_1^*,\ldots,T_d^*)$, then the discussion in \cite[Section 3]{Richter-Sundberg} shows that
\begin{equation}\label{equation_Tylor1}
\sigma(\bfT^*)=\{\overline{\bm z} : \bm z\in \sigma(\bfT)\}.
\end{equation}
Finally, in the finite-dimensional setting, the Taylor joint spectrum coincides with the joint point spectrum; that is, $
\sigma(\bfT)=\sigma_p(\bfT).$

    \subsection{Operators Associated with CNP Kernels and the Characteristic Function}\label{subsection_Char}
	A reproducing kernel $k$ on $\mathbb B_{d}$
	is said to be a complete  Nevanlinna-Pick (CNP) kernel if for any natural numbers $m,n,$ any $N$ points $\bm \lambda_{1}, \dots, \bm \lambda_{N}$ in $\mathbb{B}_{d}$ and any $m \times n$ matrices $W_{1}, \dots, W_{N},$ the condition that the $N \times N$ block matrix
	$$\Big( (I - W_{i}W_{j}^{*}) k(\bm \lambda_{i}, \bm \lambda_{j}) \Big)_{i,j=1}^{N}$$
	is positive semidefinite implies that there is a holomorphic function $\varphi: \mathbb{B}_{d} \to \mathbb{M}_{m \times n} (\mathbb{C})$ mapping $H_k \otimes \mathbb{C}^{n}$ into $H_k \otimes \mathbb{C}^{m}$ by multiplication and interpolating $\bm \lambda_{i}$ to $W_{i}, i=1, \dots, N.$ The relation to the classical Nevanlinna-Pick interpolation problem is not hard to imagine, hence the name.
	
	\medskip
	
Consider a reproducing kernel 
	\begin{equation}\label{unitarily invariant}
		k: \mathbb{B}_{d} \times \mathbb{B}_{d} \to \mathbb{C}; \quad k(\bm z, \bm w) = \sum\limits_{n=0}^{\infty} a_{n} \la \bm z , \bm w \ra^{n}, \quad (\bm z ,\bm w \in \mathbb{B}_{d})
	\end{equation}
	with $a_{0} = 1$ and $a_{n} > 0$ for all $n \geq 1.$  It is clearly {\em unitarily invariant}, i.e.,
	\[
    k(U \bm z, U \bm w) = k(\bm z, \bm w) \quad \text{for all $d\times d$ unitary matrices $U$}.
    \]
    A unitarily invariant kernel of the form \eqref{unitarily invariant} is called {\em regular} if the positive coefficients $a_{n}$ satisfies
    \[
    \lim\limits_{n \rightarrow \infty} \frac{a_{n}}{a_{n+1}} = 1.
    \]
    A reproducing kernel $k : \mathbb B_{d} \times \mathbb B_{d} \to \mathbb C$ is called {\em irreducible} if $k (\bm z, \bm w) \neq 0$ for all $ \bm z, \bm w \in \mathbb{B}_{d}$ and $k_{\bm w}$ and $k_{\bm \nu}$ are linearly independent if $\bm \nu \neq \bm w$ where $k_{\bm w}(\bm z) = k(\bm z, \bm w).$ 

    \smallskip

	There is a characterization of unitarily invariant CNP kernels which we shall greatly use, viz., a unitarily invariant kernel $k$ is irreducible and CNP  if and only if there is a sequence of non-negative real numbers $\{b_{n}\}_{n=1}^{\infty}$ such that
	\begin{equation} \label{bn}
		1 - \frac{1}{k(\bm z, \bm w)} \,=\, \sum\limits_{n=1}^{\infty} b_{n} \la \bm z, \bm w \ra^{n}, \qquad (\bm z, \bm w \in \mathbb{B}_{d}).
	\end{equation}
	\begin{definition}\label{def}
	A regular unitarily invariant kernel $k$ is called a {\em unitarily invariant CNP kernel} if  it is a irreducible CNP kernel, or equivalently, there exists a sequence of non-negative real numbers $\{b_{n}\}_{n=1}^{\infty}$ satisfying \eqref{bn}. The corresponding reproducing kernel Hilbert space is denoted by $H_k.$ 
	\end{definition}

Denote by $\mathbb{Z}_{+}$ the set of all non-negative integers. Let $\alpha= (\alpha_{1}, \ldots ,\alpha_{d})\in\mathbb{Z}^{d}_{+}$ be a {\em multi-index}. Let $\bm{z}\in \mathbb{C}^{d}$. We need the following notations.
$$ |\alpha|=\alpha_{1}+ \cdots +\alpha_{d}, \;\; \alpha !=\alpha_{1}! \ldots \alpha_{d}!, \;\; \binom{|\alpha|}{\alpha} = \frac{|\alpha|!}{\alpha_{1}!\dots\alpha_{d}!} \text{ and } \bm z^{\alpha} = z_{1}^{\alpha_{1}}  \ldots z_{d}^{\alpha_{d}}.$$
To simplify notations, we define the coefficients $a_{\alpha}$ and $b_{\alpha}$ as follows:
\begin{equation*}
	a_{\alpha} = \begin{cases}  a_{|\alpha|} \binom{|\alpha|}{\alpha}, & \alpha\in\mathbb{Z}^{d}_{+} \\ 0, & \alpha\in\mathbb{Z}^{d} \backslash \mathbb{Z}^{d}_{+} \end{cases},\quad  \text{and} \quad
	b_{\alpha}= b_{|\alpha|} \binom{|\alpha|}{\alpha},\quad  \alpha\in\mathbb{Z}^{d}_{+} \backslash \{0\}.
\end{equation*}
We shall need, for each multi-index $\alpha\in\mathbb{Z}^{d}_{+} \backslash \{0\},$  the polynomial
\begin{equation} \label{psi} 
	\psi_{\alpha} : \mathbb{B}_{d} \to \mathbb{C}; \quad \bm z \mapsto (b_{\alpha})^{1/2} \bm{z}^{\alpha}.\end{equation}

\medskip

We now turn to $d-$tuples of bounded operators $ \bfT = (T_{1}, \dots, T_{d})$. Any tuple of bounded operators in this note always consists of commuting operators. Set 
$$\bfT^{\alpha} =  T_{1}^{\alpha_{1}} \dots T_{d}^{\alpha_{d}}, \quad \alpha \in \mathbb{Z}^{d}_{+}.$$
The following definition inspired by the expression in \eqref{bn} is introduced in \cite{CH} and plays a vital role in our analysis. 

\begin{definition}
	If the series $\sum\limits_{\alpha\in\mathbb{Z}^{d}_{+} \backslash \{0\}} b_{\alpha} \bfT^{\alpha}(\bfT^\alpha)^{*}$ converges in the strong operator topology to a contraction, then the $d-$tuple $\bfT$ is referred to as a $1/k$-contraction.
	In this case, we denote the unique positive square root of the positive operator
	$$ I-\sum\limits_{\alpha\in\mathbb{Z}^{d}_{+} \backslash \{0\}} b_{\alpha} \bfT^{\alpha}(\bfT^{\alpha})^{*}$$
	by $\Delta_{\bfT}.$  We shall call $\Delta_{\bfT}$ the  defect operator.
\end{definition}

If $k$ is the Drury-Arveson kernel $1/(1 - \langle \bm z, \bm w \rangle)$, then a $1/k$-contraction is same as row contraction and the defect operator $\Delta_{\bfT}$ is $(I - T_1T_1^* - \cdots - T_dT_d^*)^{1/2}$.

\smallskip

 If $\bfT = (T_{1}, \dots, T_{d})$ is a $1/k$-contraction then the series
 \begin{equation} \label{eq:pure}
 \sum\limits_{\alpha \in \mathbb Z^{d}_{+}} a_{\alpha} \bfT^{\alpha} \Delta_{\bfT}^{2} (\bfT^{\alpha})^{*}
 \end{equation}
 converges strongly to a contraction; see \cite[Lemma 4.1]{BJ}.
 
 \begin{definition}
A $1/k$-contraction $\bfT$ is called {\em pure} if the series in \eqref{eq:pure} converges to the identity operator. It is easy to check that the compression of $\bfM_{\bm z} = (M_{z_{1}}, \dots, M_{z_{d}})$ onto a co-invariant subspace is a  pure contraction.  
 \end{definition}

\medskip

One of the principal tools we shall use is the {\em characteristic function} developed in \cite{BJ}. We need to recall the construction. Let
\[
\tilde{\cH} \bydef \oplus_{\alpha\in\mathbb{Z}^{d}_{+} \backslash \{0\}} \cH,
\]
the infinite direct sum of the Hilbert space $\cH.$  Recall the $\psi_\alpha$ from \eqref{psi} and define the infinite operator tuple
\begin{equation*}\label{def Z}
	\bfZ = (\psi_{\alpha}(z) I_{\cH})_{\alpha\in\mathbb{Z}^{d}_{+} \backslash \{0\}}.
\end{equation*}
The same notation $\bfZ$ also serves for the operator from $\tilde{\cH}$ to $\cH$ mapping $(h_{\alpha})_{\alpha\in\mathbb{Z}^{d}_{+} \backslash \{0\}}$ to $\sum\limits_{\alpha\in\mathbb{Z}^{d}_{+} \backslash \{0\}} (b_{\alpha})^{1/2} \bm{z}^{\alpha} h_{\alpha}.$
Similarly,  $\tilde{\bfT}$ stands for the infinite operator tuple
\begin{equation*}\label{def Ttilde}
	\tilde{\bfT}  =  (\psi_{\alpha}(\bfT))_{\alpha\in\mathbb{Z}^{d}_{+} \backslash \{0\}}
\end{equation*}
as well as the operator from $\tilde{\cH}$ to $\cH$ which maps $(h_{\alpha})_{\alpha\in\mathbb{Z}^{d}_{+} \backslash \{0\}}$ to $\sum\limits_{\alpha\in\mathbb{Z}^{d}_{+} \backslash \{0\}} (b_{\alpha})^{1/2} \bfT^{\alpha} h_{\alpha}.$ Note that
$$\|\bfZ\|^{2} = \sum\limits_{\alpha\in\mathbb{Z}^{d}_{+} \backslash \{0\}} b_{\alpha} | \bm{z}^{\alpha}|^{2} = 1 - \frac{1}{s(\bm{z}, \bm{z})} < 1.$$
Moreover, $\tilde{\bfT}$ is a contraction if and only if $\bfT$ is a $1/k$-contraction.  So, $I_{\cH}-\bfZ \tilde{\bfT}^{*}$ is invertible.

\medskip

A straightforward computation shows that $\Delta_{\bfT}^{2} = I_{\cH} - \tilde{\bfT} \tilde{\bfT}^{*}.$ Let $D_{\tilde{\bfT}}$ be the unique positive square root of the positive operator $I_{\tilde{H}} - \tilde{\bfT}^{*} \tilde{\bfT},$ and let $\cD_{\tilde{\bfT}} = \overline{\rm Ran} D_{\tilde{\bfT}}.$ By equation (I.3.4) of \cite{NF} we obtain the identity
\begin{equation}\label{defect}
	\tilde{\bfT}D_{\tilde{\bfT}}= \Delta_{\bfT} \tilde{\bfT}.
\end{equation}

\begin{definition}\label{def_char}
	The characteristic function of a $1/k$-contraction $\bfT = (T_{1}, \dots, T_{d})$ is the analytic operator valued function $\theta_{\bfT}:\mathbb{B}_{d}\to\cB(\cD_{\tilde{\bfT}}, \overline{\rm Ran} \Delta_{\bfT})$  given by
	$$	\theta_{\bfT}(\bm{z})=(-\tilde{\bfT}+ \Delta_{\bfT}(I_{\cH}-\bfZ \tilde{\bfT}^{*})^{-1}\bfZ D_{\tilde{\bfT}})|_{\cD_{\tilde{\bfT}}}. $$
\end{definition}
The characteristic function $\theta_{\bfT}$ takes values in $\cB(\cD_{\tilde{\bfT}}, \overline{\rm Ran} \Delta_{\bfT})$ by virtue of \eqref{defect}. For Hilbert spaces $\cE$ and $\mathcal F$, let ${\rm Mult}(H_k \otimes \cE, H_k \otimes \mathcal F )$ denote the {\em multiplier space}, i.e., the vector space of all $\mathcal B (\cE, \mathcal F)$ valued functions $\varphi$ on $\mathbb B_d$ such that the multiplication operator $M_\varphi$ is in $\mathcal B (H_k \otimes \cE, H_k \otimes \mathcal F )$. The characteristic function  $\theta_{\bfT}$ is  in ${\rm Mult}(H_k \otimes \cD_{\tilde{\bfT}}, H_k \otimes \overline{\rm Ran} \Delta_{\bfT} )$ and is contractive because of  \cite[Theorem 4.11]{BJ}. 

\smallskip

The characteristic function for row contractions was introduced in \cite{BES}, while Popescu developed the corresponding theory for noncommuting row contractions in \cite{Popescu}.

\section{Preparatory Results for the Main Theorems}
The results proved in this section constitute the main tools used in the proof of the main theorems. For clarity, the discussion is divided into subsections. Some of these results are of interest in their own right.
\subsection{Extension of the characteristic function:}
The main purpose of this subsection is to study the extension of the characteristic function  for a given $1/k$-contraction to the boundary. We begin with the following elementary lemma.

\begin{lemma}\label{lem:sum-bn-leq-one}
	Let
	\[
	k(\bm z, \bm w)  \,=\, \sum_{n=0}^{\infty} a_n \langle \bm z, \bm w\rangle^n \,=\, 1 - \frac{1}{\sum_{n=1}^{\infty} b_n \langle \bm z, \bm w\rangle^n}
	\]
	be a unitarily invariant CNP kernel on $\mathbb B_d$.
	Then
	\[
	\sum_{n=1}^{\infty} b_n\leq 1.
	\]
\end{lemma}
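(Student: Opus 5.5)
Note first that the displayed formula in the statement contains a typo: by \eqref{bn}, the intended relation is $1 - 1/k(\bm z,\bm w) = \sum_{n\ge1} b_n \langle \bm z,\bm w\rangle^n$, with $b_n\ge 0$. The plan is to restrict to the diagonal and a single complex variable, and then let that variable tend to the boundary. Fix a unit vector $\bm e\in\mathbb C^d$ and put $\bm z=\bm w=r\bm e$ with $0\le r<1$. Then $t=\langle \bm z,\bm w\rangle=r^2\in[0,1)$, and \eqref{bn} becomes
\[
\sum_{n=1}^\infty b_n t^n \,=\, 1-\frac{1}{k(r\bm e,r\bm e)} \,=\, 1-\frac{1}{\sum_{n\ge0} a_n t^n}.
\]

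Next we use the positivity of the coefficients. Since $a_0=1$ and $a_n>0$, we have $\sum_{n\ge 0} a_n t^n\ge 1$ for $t\in[0,1)$. Hence the right-hand side lies in $[0,1)$, and therefore
\[
\sum_{n\ge1} b_n t^n<1 \qquad \text{for every } t\in[0,1).
\]
(Equivalently, one can read this off from the computation $\|\bfZ\|^2 = 1-1/k(\bm z,\bm z)<1$ noted earlier in the paper.)

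Finally, let $t\uparrow 1$. Because every $b_n\ge0$, the partial sums are monotone in both $N$ and $t$. So for each $N$ we get $\sum_{n=1}^N b_n=\lim_{t\to1^-}\sum_{n=1}^N b_n t^n\le 1$; this is the monotone convergence theorem, or Abel's theorem for nonnegative series. Letting $N\to\infty$ gives $\sum_{n\ge1} b_n\le 1$.

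The argument has no real obstacle. The only point needing care is the interchange of the limit $t\to1^-$ with the infinite sum, and the truncation to finite partial sums handles that since all terms are nonnegative.
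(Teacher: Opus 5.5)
Your proof is correct and is essentially the paper's argument: restrict to the diagonal $\bm z=\bm w=r\bm\zeta$, use $k(r\bm\zeta,r\bm\zeta)\ge a_0=1$ to get $\sum_{n\ge1} b_n r^{2n}\le 1$, and let $r\uparrow 1$ using $b_n\ge 0$. You are also right that the displayed identity in the statement is a typo for $1-1/k(\bm z,\bm w)=\sum_{n\ge1} b_n\langle \bm z,\bm w\rangle^n$, and your passage through finite partial sums justifies the limit slightly more explicitly than the paper does.
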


\begin{proof}
	Fix $\bm \zeta \in\partial\mathbb B_d$. For $0\leq r<1$,
	\[
	\sum_{n=1}^{\infty}b_n r^{2n}
	\,=\,
	1-\frac{1}{k(r \bm \zeta,r \bm \zeta)}.
	\]
	Since $k(r \bm \zeta, r \bm \zeta)\geq a_0=1$, we have
	\[
	0 \,\leq\, 1-\frac{1}{k(r\bm \zeta, r \bm \zeta)} \,\leq\, 1.
	\]
	Thus $
	\sum_{n=1}^{\infty}b_n r^{2n}\leq 1 $
	for every $0\leq r<1$. Letting $r\uparrow 1$ and using $b_n\geq 0$, we obtain
	\[
	\sum_{n=1}^{\infty}b_n\leq 1.
	\]
\end{proof}
Let $\bfT = (T_{1}, \dots, T_{d})$ be a $1/k$-contraction. As a consequence of Lemma \ref{lem:sum-bn-leq-one}, we get that the series 
\[
\sum\limits_{\alpha \in \mathbb{Z}^{d}_{+} \setminus \{0\}} b_{\alpha} \bm{z}^{\alpha} (\bfT^{\alpha})^{*}
\]
converges in norm for all $\bm z \in \overline{\mathbb{B}}_d.$ The following proposition shows that the characteristic function admits a continuous extension to 
$\overline{\mathbb{B}}_d \setminus \sigma(\mathbf{T})$ for a given $1/k$-contraction.

\begin{proposition}
\label{prop:continuous-extension-characteristic-function}
Let $k$ be a unitarily invariant CNP kernel on $\mathbb B_d$. Let $\bfT=(T_1,\ldots,T_d)$ be a $1/k$-contraction on a Hilbert space $\mathcal H.$ Then the characteristic function $\theta_T$ extends continuously to
$ \overline{\mathbb B}_{d}\setminus \sigma(\bfT).$
\end{proposition}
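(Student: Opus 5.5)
The only singular ingredient in the formula for $\theta_{\bfT}$ is the inverse $(I_{\cH}-\bfZ\tilde{\bfT}^{*})^{-1}$; the other factors $\tilde{\bfT}$, $\Delta_{\bfT}$ and $D_{\tilde{\bfT}}$ do not depend on $\bm z$. The plan is therefore to show that the operator-valued function
\[
G(\bm z) \,=\, \bfZ(\bm z)\tilde{\bfT}^{*} \,=\, \sum_{\alpha\in\mathbb Z^d_+\setminus\{0\}} b_\alpha \bm z^{\alpha}(\bfT^{\alpha})^{*}
\]
is norm-continuous on $\overline{\mathbb B}_d$, and that $I_{\cH}-G(\bm\lambda)$ is invertible for every $\bm\lambda\in\overline{\mathbb B}_d\setminus\sigma(\bfT)$. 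Once this is done, the formula
\[
\theta_{\bfT}(\bm\lambda)=\big(-\tilde{\bfT}+\Delta_{\bfT}(I-G(\bm\lambda))^{-1}\bfZ(\bm\lambda)D_{\tilde{\bfT}}\big)|_{\cD_{\tilde{\bfT}}}
\]
defines the extension. Continuity then follows because inversion is norm-continuous on the open set of invertible operators.

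\textbf{Continuity of $G$ and $\bfZ$.} Since $\tilde{\bfT}$ is a contraction, each $\|\psi_\alpha(\bfT)\|\le 1$, so $\|b_\alpha^{1/2}(\bfT^\alpha)^*\|\le 1$. The series for $G$ is dominated termwise by $b_\alpha^{1/2}|\bm z^\alpha|$. This is where I have to be careful, and I would rather use the bound $\|b_\alpha \bm z^\alpha (\bfT^\alpha)^*\| \le b_\alpha\|\bfT^\alpha\|$. To control $\sum_\alpha b_\alpha\|\bfT^\alpha\|$, I would group the terms by degree $n=|\alpha|$. The remark after Lemma \ref{lem:sum-bn-leq-one} asserts norm convergence on $\overline{\mathbb B}_d$, and I will take that as given: uniform convergence of a Weierstrass-type tail on $\overline{\mathbb B}_d$ gives norm continuity of $G$. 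For $\bfZ(\bm z)$, viewed as an operator $\tilde\cH\to\cH$, the Gram computation gives
\[
\|\bfZ(\bm z)-\bfZ(\bm w)\|^2 \le \sum_\alpha b_\alpha|\bm z^\alpha-\bm w^\alpha|^2 .
\]
This tends to $0$ by dominated convergence, since $\sum_\alpha b_\alpha|\bm z^\alpha|^2=\sum_n b_n\|\bm z\|^{2n}\le\sum_n b_n\le 1$ on $\overline{\mathbb B}_d$ by Lemma \ref{lem:sum-bn-leq-one}.

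\textbf{Invertibility off the spectrum.} This is the main step. Put $f(\bm z)=\sum_{n\ge1}b_n\langle \bm z,\bar{\bm\lambda}\rangle^n$, i.e. $f_{\bm\lambda}(\bm z)=1-1/k(\bm z,\bar{\bm\lambda})$ formally. The claim is that $I-G(\bm\lambda)$ equals $1-f_{\bm\lambda}$ evaluated at $\bfT^*$, where
\[
f_{\bm\lambda}(\bm w)=\sum_\alpha b_\alpha \bm\lambda^\alpha \bm w^\alpha = 1-\frac{1}{k(\bm w,\bar{\bm\lambda})}.
\]
This function is holomorphic on a neighbourhood of $\overline{\mathbb B}_d$ minus problem points. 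To avoid regularity issues I would instead factor algebraically: write $1-f_{\bm\lambda}(\bm w)=h(\bm w)\cdot(\ldots)$. Since $1-\sum_n b_n t^n$ vanishes at $t=1$ only in the boundary case, I will use the identity $1-\sum_n b_n t^n=(1-t)\,q(t)+\big(1-\sum_n b_n\big)$ with $q(t)=\sum_{m\ge0}\big(\sum_{n>m}b_n\big)t^m$ (Abel summation). Substituting $t=\langle \bfT^*,\bar{\bm\lambda}\rangle$ is legitimate only up to noncommutative ordering, which is harmless here because the $T_i$ commute. So I would instead rely on the Taylor spectral mapping theorem. Since $\bm\lambda\notin\sigma(\bfT)$ gives $\bar{\bm\lambda}\notin\sigma(\bfT^*)$ by \eqref{equation_Tylor1}, the operator $I-G(\bm\lambda)$ is $g(\bfT^*)$ for the function $g(\bm w)=1-\sum_\alpha b_\alpha\bm\lambda^\alpha\bm w^\alpha$. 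This $g$ is continuous on $\overline{\mathbb B}_d$, and it is holomorphic on a neighbourhood of $\sigma(\bfT^*)$ after approximation by the dilates $g_r(\bm w)=g(r\bm w)$. The spectral mapping theorem (via norm limits of the $g_r(\bfT^*)$, or via the Arveson/Clou\^atre--Hartz $A(H_k)$-functional calculus for $1/k$-contractions) gives $\sigma(g(\bfT^*))=g(\sigma(\bfT^*))$. So it suffices to show that $g(\bm w)\neq0$ for $\bm w\in\overline{\mathbb B}_d$ with $\bm w\neq\bar{\bm\lambda}$.

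To see this, note $|\sum_\alpha b_\alpha \bm\lambda^\alpha\bm w^\alpha|=|\sum_n b_n\langle \bm w,\bar{\bm\lambda}\rangle^n|\le\sum_n b_n|\langle\bm w,\bar{\bm\lambda}\rangle|^n\le 1$. Equality forces $|\langle \bm w,\bar{\bm\lambda}\rangle|=1$ and $\sum_n b_n=1$, and then equality in Cauchy--Schwarz gives $\bm w=c\bar{\bm\lambda}$ with $\bm\lambda\in\partial\mathbb B_d$. Moreover the phases must align, so $\langle\bm w,\bar{\bm\lambda}\rangle^n=1$ for all $n$ with $b_n>0$. The main obstacle is that this equality case does not automatically reduce to $c=1$ when the $b_n$ are supported on a sublattice. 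I expect to handle it using the fact that $\bm w$ must then lie in $\sigma(\bfT^*)$, together with the circular symmetry available for unitarily invariant kernels, or else by invoking the $A(H_k)$ functional calculus directly, which is where $g$ genuinely lives.
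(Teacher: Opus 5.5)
Your overall reduction matches the paper's. Everything hinges on invertibility of $I_{\cH}-\bfZ(\bm\lambda)\tilde{\bfT}^{*}$ for $\bm\lambda\in\overline{\mathbb B}_d\setminus\sigma(\bfT)$, after which norm continuity of $\bm z\mapsto\bfZ(\bm z)\tilde{\bfT}^{*}$ and of inversion finishes the argument. That invertibility step, however, is not established.

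The identity $\sigma(g(\bfT^*))=g(\sigma(\bfT^*))$ is asserted rather than proved. The function $g$ need not be holomorphic near $\sigma(\bfT^*)$ when this set meets the sphere. The spectrum is only upper semicontinuous under norm limits, so the inclusion you actually need, $\sigma(g(\bfT^*))\subseteq g(\sigma(\bfT^*))$, does not pass from the dilates $g_r$ to $g$ without extra control (e.g.\ a uniform bound on $\|g_r(\bfT^*)^{-1}\|$). Invoking an $A(H_k)$ spectral mapping theorem only relocates the problem.

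The paper needs no functional calculus. Since the series converges in norm, $G(\bm\lambda)$ lies in the closed unital commutative algebra $\mathcal A$ generated by $T_1^*,\dots,T_d^*$. For every character $\varphi$ of $\mathcal A$, continuity and multiplicativity give $\varphi(I-G(\bm\lambda))=1-\sum_{n\ge1}b_n\langle\bm\lambda,\overline{\bm\mu}\rangle^n$ with $\bm\mu=(\varphi(T_1^*),\dots,\varphi(T_d^*))$. If this never vanishes, $I-G(\bm\lambda)$ is invertible in $\mathcal A$, hence in $\mathcal B(\cH)$. Strictly, $\bm\mu$ is only known to lie in the polynomially convex hull of $\sigma(\bfT^*)$, because $|\varphi(p(\bfT^*))|\le r(p(\bfT^*))=\max_{\sigma(\bfT^*)}|p|$ for polynomials $p$; it need not lie in $\sigma(\bfT^*)$ itself, as the paper writes. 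But this hull lies in $\overline{\mathbb B}_d$ and meets $\partial\mathbb B_d$ only in $\sigma(\bfT^*)$, as the polynomial $p(\bm w)=(1+\langle\bm w,\bm\zeta\rangle)/2$ peaking at $\bm\zeta\in\partial\mathbb B_d$ shows. That is all that is needed.

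The second gap concerns the zeros of $g$. Your analysis stops at what you call the main obstacle, and the remedies you suggest cannot work, since $\sigma(\bfT)$ has no circular symmetry. Indeed, if the $b_n$ really were supported on $m\mathbb N$ with $m\ge2$ and $\sum_nb_n=1$, the invertibility claim would be false. Take $d=1$, $\bfT=\mu$ a unimodular scalar and $\lambda=e^{2\pi i/m}\mu\notin\sigma(\bfT)$; then $1-\sum_nb_n\lambda^n\overline{\mu}^n=0$.

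The missing observation is that this case never arises. Comparing coefficients in $1-1/k(\bm z,\bm w)=a_1\langle\bm z,\bm w\rangle+\cdots$ gives $b_1=a_1>0$, so $c=c^1=1$. Hence $1-\sum_nb_n\langle\bm\lambda,\overline{\bm\mu}\rangle^n=0$ with $\bm\mu\in\overline{\mathbb B}_d$ forces $\overline{\bm\mu}=\bm\lambda\in\partial\mathbb B_d$. By the peak-polynomial remark above, $\bm\mu\in\sigma(\bfT^*)$, so $\bm\lambda\in\sigma(\bfT)$, a contradiction. This fact is implicit in the paper's assertion that $\sum_nb_n\langle\bm z,\bm\lambda\rangle^n\neq1$ for $\bm z\neq\bm\lambda$. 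It is used explicitly, as $b_\alpha\neq0$ for $|\alpha|=1$, in the proof of Theorem~\ref{theorem_tylor_lower}.

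Finally, the norm convergence you take as given cannot come from $\sum_\alpha b_\alpha\|\bfT^\alpha\|$, which may diverge. This already happens for $M_z$ on the Dirichlet space, where $\|M_z^n\|=\sqrt{n+1}$ and $b_n\sim1/(n\log^2n)$. Instead, write the tail $\sum_{|\alpha|>N}b_\alpha\bm z^\alpha(\bfT^\alpha)^*$ as a scalar row of norm at most $(\sum_{n>N}b_n)^{1/2}$ times the adjoint of the contractive row $(b_\alpha^{1/2}\bfT^\alpha)_{|\alpha|>N}$. This yields convergence uniformly on $\overline{\mathbb B}_d$.
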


\begin{proof}
	First we prove that
	$ I_{\mathcal H}-\bfZ\tilde \bfT^*$
	is invertible whenever $ \bm z\in\overline{\mathbb B}_{d}\setminus\sigma(\bfT).$
	Fix such a point $\bm z$, and set 
	\begin{equation} \label{eq:norm convergent series}
		A_{\bm z} 
		\,:=\, 
		\bfZ \tilde \bfT^*
		\,=\,
		\sum\limits_{\alpha \in \mathbb{Z}^{d}_{+} \setminus \{0\}} b_{\alpha} \bm{z}^{\alpha} (\bfT^{\alpha})^{*}.
	\end{equation}
	Let $\mathcal A$ be the unital commutative Banach algebra generated by
	$T_1^*,\ldots,T_d^*$. Then $A_{\bm z}\in\mathcal A$ since the series in \eqref{eq:norm convergent series} converges in norm.  We claim that $I_{\mathcal H}-A_{\bm z}$ is invertible. It is enough to show that its Gelfand transform is nowhere zero on the maximal ideal space of $\mathcal A$.
	
	\smallskip
	
	Let $\varphi$ be a character of $\mathcal A$. Set
	\[
	\mu_j \,:=\, \varphi(T_j^*),
	\qquad j=1,\ldots,d.
	\]
	Then $\bm \mu=(\mu_1,\ldots,\mu_d)\in\sigma(T_{1}^{*}, \dots, T_{d}^{*})$. Equivalently, $ \bm \lambda:=\overline{\bm \mu}\in\sigma(\bfT).$
	Since $\sigma(\bfT)\subseteq \overline{\mathbb B_d}$, we have
	$ \bm \lambda\in\overline{\mathbb B}_{d}.$ Now
	\[
	\varphi(A_{\bm z})
	\,=\,
	\sum_{\alpha\neq 0}b_\alpha \bm z^\alpha \bm \mu^\alpha
	\,=\,
	\sum_{n=1}^{\infty}b_n\langle \bm z, \bm \lambda\rangle^n.
	\]
	Since $\bm z \neq \bm \lambda,$ the right hand side can not be $1.$  Thus, for every character $\varphi$ of $\mathcal A$,
	\[
	\varphi(I_{\mathcal H}-A_{\bm z})
	\,=\,
	1-\varphi(A_{\bm z})
	\,\neq\,
	0.
	\]
	Hence $I_{\mathcal H}-A_{\bm z}$ is invertible in $\mathcal A$, and therefore invertible in $\mathcal B(\mathcal H)$.
	
	\smallskip
	
	Now define, for $\bm z\in\overline{\mathbb B}_{d}\setminus\sigma(\bfT)$,
	\[
	\widehat{\theta}_\bfT(\bm z)
	=
	\left(
	-\tilde \bfT
	+
	\Delta_\bfT
	\left(I_{\mathcal H}- \bfZ \tilde \bfT^*\right)^{-1}
	\bfZ D_{\tilde \bfT}
	\right)\bigg|_{\mathcal D_{\tilde \bfT}}.
	\]
	This is well-defined by the preceding paragraph. It remains to prove continuity. Since $ \bm z\mapsto \bfZ $
	is norm-continuous on $\overline{\mathbb B}_{d}$, the map $ \bm z\mapsto \bfZ \tilde \bfT^* $
	is also norm-continuous on $\overline{\mathbb B}_{d}$. Inversion is norm-continuous on the open set of invertible operators. Therefore
	\[
	\bm z\mapsto
	\left(I_{\mathcal H}- \bfZ \tilde \bfT^*\right)^{-1}
	\]
	is norm-continuous on $ \overline{\mathbb B}_{d}\setminus\sigma(\bfT).$
	The remaining factors $\bfT$, $\Delta_\bfT$, and $D_{\tilde \bfT}$ are fixed bounded operators. Hence
	\[
	\bm z\mapsto \widehat{\theta}_\bfT(\bm z)
	\]
	is norm-continuous on $ \overline{\mathbb B}_{d}\setminus\sigma(\bfT).$
	
	\smallskip
	
	Finally, for $\bm z\in\mathbb B_d$, the above formula agrees with the original characteristic function $\theta_{\bfT} (\bm z)$. Hence $\widehat\theta_{\bfT}$ is a continuous extension of $\theta_{\bfT}$ to $
	\overline{\mathbb B}_{d}\setminus\sigma(\bfT).$
\end{proof}

\subsection{Smallest Reducing Subspace Containing a Co-Invariant Subspace}  We start with the following lemmas.
\begin{lemma}\label{lemma_compact}
	Let $k$ be a unitarily invariant kernel on $\mathbb{B}_{d}$ defined in \eqref{unitarily invariant} satisfying
	\begin{enumerate}
		\item $ \sup\limits_{n} \frac{a_{n}}{a_{n+1}} \,<\, \infty,$ and 
		\item $\lim\limits_{n \rightarrow \infty} \left( \frac{a_{n}}{a_{n+1}} - \frac{a_{n-1}}{a_{n}}  \right) \,=\,  0, $
	\end{enumerate}
	Then for every pair of multi-indices
	\(\alpha,\beta\in \mathbb Z^d_{+}\), the commutator 
	\[
	\left[\bfM_{\bm z}^\alpha,  (\bfM_{\bm z}^{\beta})^{*}\right]
	\,:=\,
	\bfM_{\bm z}^\alpha (\bfM_{\bm z}^{\beta})^{*} - (\bfM_{\bm z}^{\beta})^{*} \bfM_{\bm z}^\alpha
	\]
	is compact on $H_k$.
\end{lemma}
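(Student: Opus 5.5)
The plan is to work entirely in the orthogonal basis of monomials. Since $k$ is unitarily invariant with $k(\bm z,\bm w)=\sum a_n\langle \bm z,\bm w\rangle^n$, the monomials $\bm z^\gamma$ are orthogonal in $H_k$ with $\|\bm z^\gamma\|^2 = 1/a_\gamma = \frac{\gamma!}{|\gamma|!\,a_{|\gamma|}}$. Hence $M_{z_j}$ and $M_{z_j}^*$ are weighted shifts on this basis:
\[
M_{z_j}\bm z^\gamma=\bm z^{\gamma+e_j},\qquad M_{z_j}^*\bm z^{\gamma}=\frac{a_{\gamma-e_j}}{a_\gamma}\,\bm z^{\gamma-e_j},
\]
with $a_{\gamma-e_j}=0$ when $\gamma_j=0$. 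Condition (1) guarantees that these operators are bounded. The ratio $a_{\gamma-e_j}/a_\gamma$ equals $\frac{a_{n-1}}{a_n}\cdot\frac{\gamma_j}{n}$ for $n=|\gamma|$.

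First I reduce to the case of single variables. Commutators satisfy the Leibniz rule $[AB,C]=A[B,C]+[A,C]B$. Products of bounded operators with compacts are compact, and the compacts form an ideal. Expanding $\bfM_{\bm z}^\alpha$ and $(\bfM_{\bm z}^\beta)^*$ as products of the $M_{z_i}$ and $M_{z_j}^*$ therefore reduces the claim to showing that $[M_{z_i},M_{z_j}^*]$ is compact for all $i,j$.

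Next I compute this commutator on the basis. For $\gamma$ with $|\gamma|=n$, both $M_{z_i}M_{z_j}^*\bm z^\gamma$ and $M_{z_j}^*M_{z_i}\bm z^\gamma$ are multiples of $\bm z^{\gamma+e_i-e_j}$. Their difference is
\[
\left(\frac{a_{n-1}}{a_n}\frac{\gamma_j}{n}-\frac{a_n}{a_{n+1}}\frac{\gamma_j+\delta_{ij}}{n+1}\right)\bm z^{\gamma+e_i-e_j}.
\]
I would rewrite the coefficient as
\[
\left(\frac{a_{n-1}}{a_n}-\frac{a_n}{a_{n+1}}\right)\frac{\gamma_j}{n}+\frac{a_n}{a_{n+1}}\left(\frac{\gamma_j}{n}-\frac{\gamma_j+\delta_{ij}}{n+1}\right).
\]
The first term tends to $0$ by hypothesis (2), since $\gamma_j/n\le 1$. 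The second term is bounded by $\sup_n\frac{a_n}{a_{n+1}}\cdot\frac{2}{n+1}$ by hypothesis (1). So the coefficient is $O(\epsilon_n)$ with $\epsilon_n\to0$.

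The main obstacle is to convert this coefficient estimate into compactness, because the map $\gamma\mapsto\gamma+e_i-e_j$ sends one orthogonal basis vector to another that is not normalized the same way. I would handle this by grading $H_k=\bigoplus_n \mathcal P_n$, where $\mathcal P_n$ is the space of homogeneous polynomials of degree $n$. The commutator preserves each $\mathcal P_n$, so $C=[M_{z_i},M_{z_j}^*]=\bigoplus_n C_n$, and it suffices to show $\|C_n\|\to0$. Each $\mathcal P_n$ is finite-dimensional, so the partial sums $\bigoplus_{n\le N}C_n$ are finite rank and converge to $C$ in norm.

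To bound $\|C_n\|$, I write $C_n=c_n\,(M_{z_i}M_{z_j}^*)|_{\mathcal P_n}+R_n$, where $c_n=\frac{a_{n-1}}{a_n}-\frac{a_n}{a_{n+1}}\cdot\frac{n}{n+1}$ and the remainder is $R_n=-\frac{a_n}{a_{n+1}}\frac{\delta_{ij}}{n+1}$ times a contraction-like diagonal operator. To justify the splitting, I note that $M_{z_j}^*M_{z_i}\bm z^\gamma$ has the same shape as $M_{z_i}M_{z_j}^*\bm z^\gamma$, but with ratio $\frac{a_n}{a_{n+1}}\frac{\gamma_j+\delta_{ij}}{n+1}$. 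This gives $M_{z_j}^*M_{z_i}|_{\mathcal P_n}=\frac{a_n}{a_{n+1}}\frac{n}{n+1}\frac{a_n}{a_{n-1}}\,M_{z_i}M_{z_j}^*|_{\mathcal P_n}+\frac{a_n}{a_{n+1}}\frac{\delta_{ij}}{n+1}I$ when $a_{n-1}/a_n$ is bounded below. For $\gamma_j=0$ and $i=j$ the identity is checked directly.

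Finally, $\|M_{z_i}M_{z_j}^*\|$ is bounded uniformly by (1). Hypothesis (2) gives $c_n\to0$, and (1) gives $\|R_n\|\to0$. Hence $\|C_n\|\to0$, and $C$ is a norm limit of finite-rank operators, so it is compact. This is the step where care with normalizations is needed.
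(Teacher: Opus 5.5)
Your reduction to the basic commutators $[M_{z_i},M_{z_j}^*]$ via the Leibniz identities is the same as the paper's. The paper then cites Guo's Corollary 4.4 for compactness of these basic commutators, whereas you prove it directly, degree by degree. That direct computation is correct until the last step, where the normalization slips. Your own identity for $M_{z_j}^*M_{z_i}|_{\mathcal P_n}$ gives
\[
C_n=\Bigl(1-\frac{a_n}{a_{n+1}}\,\frac{n}{n+1}\,\frac{a_n}{a_{n-1}}\Bigr)M_{z_i}M_{z_j}^*|_{\mathcal P_n}-\frac{a_n}{a_{n+1}}\,\frac{\delta_{ij}}{n+1}\,I_{\mathcal P_n}
=\frac{a_n}{a_{n-1}}\,c_n\,M_{z_i}M_{z_j}^*|_{\mathcal P_n}+R_n .
\]
So the coefficient of $M_{z_i}M_{z_j}^*|_{\mathcal P_n}$ is $\frac{a_n}{a_{n-1}}c_n$, not $c_n$. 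If you bound $M_{z_i}M_{z_j}^*$ only by its global norm, you need $\sup_n a_n/a_{n-1}<\infty$, i.e.\ $a_{n-1}/a_n$ bounded below. You mention this condition in passing, but it is not among hypotheses (1)--(2), which only bound $a_n/a_{n+1}$ from above. It does hold under the regularity assumed later in the paper, but not under the lemma's stated hypotheses. As written, therefore, $\|C_n\|\to0$ is not established.

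The fix is to use the size of $M_{z_i}M_{z_j}^*$ on $\mathcal P_n$ rather than its global norm. For $n\ge1$ define $W_n$ on $\mathcal P_n$ by $W_n\bm z^\gamma=\frac{\gamma_j}{n}\bm z^{\gamma+e_i-e_j}$. Your formulas say exactly that $M_{z_i}M_{z_j}^*|_{\mathcal P_n}=\frac{a_{n-1}}{a_n}W_n$ and $M_{z_j}^*M_{z_i}|_{\mathcal P_n}=\frac{a_n}{a_{n+1}}\bigl(\frac{n}{n+1}W_n+\frac{\delta_{ij}}{n+1}I\bigr)$. Hence $C_n=c_nW_n+R_n$ with your $c_n$.

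For $i\neq j$ one has $\|\bm z^{\gamma+e_i-e_j}\|^2/\|\bm z^\gamma\|^2=a_\gamma/a_{\gamma+e_i-e_j}=(\gamma_i+1)/\gamma_j$. So $W_n$ is a weighted shift with weights $\sqrt{\gamma_j(\gamma_i+1)}/n\le (n+1)/(2n)\le 1$. For $i=j$ it is diagonal with entries $\gamma_j/n\le1$. Thus $\|W_n\|\le1$ and
\[
\|C_n\|\le |c_n|+\frac{a_n}{a_{n+1}}\,\frac{1}{n+1}\longrightarrow 0
\]
by (1) and (2) alone. With this correction your argument is a complete, self-contained substitute for the citation of Guo. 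It also shows where each hypothesis enters: (1) gives boundedness and controls the $\delta_{ij}/(n+1)$ term, and (2) gives $c_n\to0$.
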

\begin{proof}
	By \cite[Corollary 4.4]{Guo}, the basic commutators $M_{z_{i}} M_{z_{j}}^{*} - M_{z_{j}}^{*} M_{z_{i}}$
	are compact for all $i,j = 1,\dots,d$. Since the space of compact operators forms a two-sided ideal in $B(H_k)$, it follows from well known commutator identities of operators 
	\[
	[XY,Z] \,=\, X[Y,Z] + [X,Z]Y \quad \text{ and } \quad 
	[X,YZ]=[X,Y]Z+Y[X,Z] 
	\]
	that the operators $ \bfM_{\bm z}^\alpha (\bfM_{\bm z}^{\beta})^{*} - (\bfM_{\bm z}^{\beta})^{*} \bfM_{\bm z}^\alpha$ are compact. 
\end{proof}

\begin{lemma}\label{lemma_CI}
	Let $\mathcal E$ be a Hilbert space. Let $\mathcal K$ denote the set of compact operators on $H_{k}$ and let $\cM$ be an $\bfM_{\bm z}$-invariant closed subspace of $H_{k} \otimes \mathcal E$. Then 
	\[
	\mathcal{K} \left( \mathcal{M}^\perp\right) 
	\,:=\,
	\{(X \otimes I_{\mathcal E}) (\mathcal M^{\perp})\ :\ X \in \mathcal K \} 
	\,\subseteq\,
	\overline{\operatorname{span}}\left\{\bfM_{\bm z}^\alpha(\mathcal{M}^\perp) \ :\  \alpha\in\mathbb{Z}_{+}^d\right\}.
	\]
\end{lemma}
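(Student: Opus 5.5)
The plan is to show that the closed subspace
\[
\mathcal S \,:=\, \overline{\operatorname{span}}\left\{\bfM_{\bm z}^\alpha(\mathcal{M}^\perp) \ :\ \alpha\in\mathbb{Z}_{+}^d\right\}
\]
contains $(X\otimes I_{\mathcal E})h$ for every $h\in\mathcal M^\perp$ and every compact $X$. We do this first for a dense family of finite-rank $X$, and then pass to norm limits. Throughout, $\bfM_{\bm z}$ acts on $H_k\otimes\mathcal E$ as $\bfM_{\bm z}\otimes I_{\mathcal E}$. Three facts are used repeatedly:
\begin{itemize}
\item $\mathcal M^\perp$ is co-invariant, so $(\bfM_{\bm z}^{\beta})^*(\mathcal M^\perp)\subseteq\mathcal M^\perp$;
\item each $\bfM_{\bm z}^\alpha$ maps $\mathcal S$ into $\mathcal S$, because it maps the spanning set into itself and is continuous;
\item $\mathcal S$ is a closed subspace.
\end{itemize}

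\textbf{Step 1: $E_0$ as a strong limit.} Let $E_0$ denote the orthogonal projection of $H_k$ onto the constants. We claim
\[
E_0 \,=\, I-\sum_{\alpha\neq 0} b_\alpha \bfM_{\bm z}^\alpha(\bfM_{\bm z}^\alpha)^*,
\]
with the series converging strongly. To see this, test against kernel functions, using $(\bfM_{\bm z}^\alpha)^*k_{\bm w}=\overline{\bm w}^{\alpha}k_{\bm w}$ and \eqref{bn}:
\[
\Big\langle \Big(I-\sum_{\alpha\neq0} b_\alpha \bfM_{\bm z}^\alpha(\bfM_{\bm z}^\alpha)^*\Big)k_{\bm w},k_{\bm z}\Big\rangle
\,=\,k(\bm z,\bm w)\Big(1-\sum_{n\ge1}b_n\langle \bm z,\bm w\rangle^n\Big)\,=\,1 .
\]
Since $k(\cdot,0)\equiv 1$ (because $a_0=1$), the right-hand side equals $\langle E_0k_{\bm w},k_{\bm z}\rangle$.

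Strong convergence follows from two observations. The partial sums are positive and increasing. They are bounded by $I$, because on the dense set of kernel functions the quadratic forms converge and $\ge 0$ remains. The claimed identity is the one point needing care; if there is any doubt about convergence, I would instead quote that $\bfM_{\bm z}$ is a pure $1/k$-contraction with $\Delta^2_{\bfM_{\bm z}}=E_0$.

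\textbf{Step 2: $(E_0\otimes I_{\mathcal E})(\mathcal M^\perp)\subseteq\mathcal S$.} Fix $h\in\mathcal M^\perp$. Each term of the series applied to $h$ is
\[
b_\alpha\bfM_{\bm z}^\alpha\big((\bfM_{\bm z}^\alpha)^*h\big)\in \bfM_{\bm z}^\alpha(\mathcal M^\perp),
\]
and $h$ itself lies in $\bfM_{\bm z}^0(\mathcal M^\perp)$. Since the series converges strongly and $\mathcal S$ is closed, $(E_0\otimes I_{\mathcal E})h\in\mathcal S$.

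\textbf{Step 3: rank-one operators.} For multi-indices $\alpha,\beta$, the rank-one operator $\bm z^\alpha\otimes(\bm z^\beta)^*$ (that is, $f\mapsto\langle f,\bm z^\beta\rangle\bm z^\alpha$) equals $\bfM_{\bm z}^\alpha E_0(\bfM_{\bm z}^\beta)^*$. Apply it, tensored with $I_{\mathcal E}$, to $h\in\mathcal M^\perp$:
\begin{itemize}
\item $(\bfM_{\bm z}^\beta)^*h$ stays in $\mathcal M^\perp$ by co-invariance;
\item $E_0$ then sends it into $\mathcal S$ by Step 2;
\item $\bfM_{\bm z}^\alpha$ preserves $\mathcal S$.
\end{itemize}
Linear combinations of these operators therefore also map $\mathcal M^\perp$ into $\mathcal S$.

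\textbf{Step 4: density.} Polynomials are dense in $H_k$, so finite-rank operators built from polynomial vectors are norm dense in $\mathcal K$. Given $X\in\mathcal K$, choose $X_n$ in the span of the operators $\bm z^\alpha\otimes(\bm z^\beta)^*$ with $\|X_n-X\|\to0$. Then
\[
(X_n\otimes I_{\mathcal E})h\to (X\otimes I_{\mathcal E})h ,
\]
and $(X\otimes I_{\mathcal E})h\in\mathcal S$ because $\mathcal S$ is closed. This proves the lemma.
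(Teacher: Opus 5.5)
Your proof is correct and follows essentially the same route as the paper. You reduce by density to rank-one operators with polynomial (here monomial) entries, write them as $\bfM_{\bm z}^\alpha E_0 (\bfM_{\bm z}^\beta)^*$, use co-invariance of $\mathcal M^\perp$, and place $(E_0\otimes I_{\mathcal E})(\mathcal M^\perp)$ in the closed span via the strongly convergent expansion $E_0 = I-\sum_{\alpha\neq 0} b_\alpha \bfM_{\bm z}^\alpha(\bfM_{\bm z}^\alpha)^*$; you are also more explicit than the paper, which only recalls that expansion and uses the invariance of the closed span under $\bfM_{\bm z}^\alpha$ implicitly.
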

\begin{proof}
	Let us denote the closed subspace $
	\overline{\operatorname{span}}
	\left\{
	\bfM_{\bm z}^\alpha(\mathcal M^\perp)
	\ :\ \alpha\in\mathbb Z_+^d
	\right\}$ by $\mathcal U$. It is enough to prove that 
	\[
	(X\otimes I_{\mathcal E}) (\mathcal M^{\perp}) \,\subseteq\, \mathcal U
	\]
	for rank-one $X$, since they span finite-rank operators which are dense in $\mathcal{K}$ and $\mathcal U$ is a closed subspace. A rank one operator on $H_k$ is of the form 
	\[
	f\otimes g^* \,:\, \eta \,\mapsto\, \langle \eta, f\rangle\, g  \qquad (\eta \in H_k)
	\]
	for some $f$ and $g$ in $H_k.$ Since polynomials are dense in $H_k,$ we can assume that $f$ and $g$ are polynomials. In this case, the rank-one operator $f \otimes g^{*}$ can be written as
	$$
	f \otimes g^{*} = \bfM_f \, E_0\, \bfM_g^{*},
	$$
	where $E_0$ denotes the orthogonal projection of $H_k$ onto the one-dimensional subspace of constant functions.  Since $\mathcal{M}$ is $\bfM_{\bm z}$-invariant,
	$\mathcal{M}^\perp$ is $\bfM_{\bm z}^*$-invariant. As $g$ is a polynomial, we therefore obtain
	\[
	\bfM_g^*  (\mathcal{M}^\perp) 
	\,\subseteq\, 
	\mathcal{M}^\perp .
	\]
	Thus, it suffices to show that $(E_0 \otimes I_{\mathcal E}) \left(\mathcal{M}^\perp\right)\subset\mathcal U$. 
	
	\smallskip
	
	Let $h \in \mathcal{M}^\perp$. We want to show that $(E_0 \otimes I_{\mathcal E})  h \in \mathcal{U}$. Recall that the series $ \sum_{\alpha \in \mathbb{Z}^d_{+} \setminus \{0\}} b_{\alpha}\, \bfM_z^{\alpha} (\bfM_z^{\alpha})^{*} $
	converges to $I_{H_k \otimes \mathcal E} - E_{0} \otimes I_{\mathcal E}$ in the strong operator topology. 
	Thus for $\xi \in\mathcal{U}^\perp$ we obtain
	\begin{align*}
		\big\langle (E_{0} \otimes I_{\mathcal E}) h, \xi \big\rangle
		& \,=\, \langle h, \xi \rangle - \Big\langle\sum_{\alpha \in \mathbb{Z}^d_{+} \setminus \{0\}} b_{\alpha}\, \bfM_z^{\alpha} (\bfM_z^{\alpha})^{*} h, \xi \Big\rangle \\
		& \,=\, \langle h, \xi \rangle - \sum_{\alpha \in \mathbb{Z}^d_{+} \setminus \{0\}} b_{\alpha}\, \big\langle \bfM_z^{\alpha} (\bfM_z^{\alpha})^{*} h, \xi \big\rangle \\
		& \,=\, 0.
	\end{align*}
	Consequently, $\big\langle (E_{0} \otimes I_{\mathcal E}) h, \xi \big\rangle = 0$ for all $\xi \in \mathcal U^{\perp},$ and therefore
	\[
	(E_{0} \otimes I_{\mathcal E})(\mathcal{M}^\perp) \subseteq \mathcal{M}^\perp.
	\]
\end{proof}

\begin{proposition} \label{lem:smallest reducing subspace}
	Let $\mathcal{R}$ be the smallest reducing subspace for $\bfM_{\bm z}$ containing $\mathcal{M}^\perp$. Then 
	\[
	\mathcal{R}
	\,=\,
	\overline{\operatorname{span}}\left\{\bfM_{\bm z}^\alpha(\mathcal{M}^\perp) \ :\  \alpha\in\mathbb{Z}_{+}^d\right\}.
	\]
\end{proposition}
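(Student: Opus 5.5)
Set $\mathcal U := \overline{\operatorname{span}}\{\bfM_{\bm z}^\alpha(\mathcal M^\perp) : \alpha\in\mathbb Z_+^d\}$. We will show $\mathcal R\subseteq\mathcal U$ and $\mathcal U\subseteq\mathcal R$. The second inclusion is immediate. $\mathcal R$ is reducing, hence invariant under every $\bfM_{\bm z}^\alpha$ (we write $\bfM_{\bm z}^\alpha$ for $\bfM_{\bm z}^\alpha\otimes I_{\mathcal E}$), and it contains $\mathcal M^\perp$. So it contains every $\bfM_{\bm z}^\alpha(\mathcal M^\perp)$, and being closed it contains $\mathcal U$.

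For the reverse inclusion it suffices to show that $\mathcal U$ is itself reducing and contains $\mathcal M^\perp$; minimality of $\mathcal R$ then gives $\mathcal R\subseteq\mathcal U$. Containment is clear from $\alpha=0$, and $\mathcal U$ is obviously invariant under each $M_{z_j}$. The real content is invariance under each $M_{z_j}^*$. By linearity and continuity it is enough to show
\[
M_{z_j}^*\bfM_{\bm z}^\alpha h\in\mathcal U\qquad (h\in\mathcal M^\perp,\ \alpha\in\mathbb Z_+^d).
\]
We write
\[
M_{z_j}^*\bfM_{\bm z}^\alpha h=\bfM_{\bm z}^\alpha M_{z_j}^* h+[M_{z_j}^*,\bfM_{\bm z}^\alpha]h .
\]
The first term lies in $\bfM_{\bm z}^\alpha(\mathcal M^\perp)\subseteq\mathcal U$, because $\mathcal M^\perp$ is $\bfM_{\bm z}^*$-invariant. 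For the second term, the commutator has the form $X\otimes I_{\mathcal E}$ with $X=[M_{z_j}^*,\bfM_{\bm z}^\alpha]$ on $H_k$. By Lemma~\ref{lemma_compact}, $X$ is compact. Here we use regularity of $k$: $a_n/a_{n+1}\to1$ gives both hypotheses of that lemma, since the ratios converge and therefore are bounded with consecutive differences tending to $0$. Lemma~\ref{lemma_CI} then gives $(X\otimes I_{\mathcal E})h\in\mathcal U$. Hence $M_{z_j}^*\mathcal U\subseteq\mathcal U$, so $\mathcal U$ is reducing.

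The main obstacle is Lemma~\ref{lemma_CI} itself, and in particular its final step. That step must show $(E_0\otimes I)h\in\mathcal U$ for $h\in\mathcal M^\perp$, where $E_0$ is the projection onto constants. The lemma's proof argues that $\langle \bfM_z^\alpha(\bfM_z^\alpha)^*h,\xi\rangle=0$ for $\xi\in\mathcal U^\perp$, using $(\bfM_z^\alpha)^*h\in\mathcal M^\perp$. We then get $\langle (E_0\otimes I)h,\xi\rangle=\langle h,\xi\rangle=0$, so the conclusion should read $(E_0\otimes I)(\mathcal M^\perp)\subseteq\mathcal U$; the proof's last line states $\subseteq\mathcal M^\perp$, which appears to be a typo. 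For a rank-one $f\otimes g^*$ with polynomial $f$, one then gets $\bfM_f(E_0\otimes I)\bfM_g^*h\in\bfM_f\,\mathcal U\subseteq\mathcal U$. This uses that $\mathcal U$ is $\bfM_{\bm z}$-invariant, hence invariant under multiplication by polynomials.

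Granting Lemma~\ref{lemma_CI} with this reading, the proposition follows by the two inclusions above. The only remaining check is that the commutator estimate from Lemma~\ref{lemma_compact} applies for every multi-index $\alpha$, and that lemma already covers arbitrary $\alpha,\beta$.
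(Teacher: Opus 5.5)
Your proof is correct and is essentially the paper's argument: you show $\mathcal U$ is reducing by splitting $M_{z_j}^*\bfM_{\bm z}^\alpha h$ into $\bfM_{\bm z}^\alpha M_{z_j}^*h\in\mathcal U$ plus a compact commutator term, which Lemma~\ref{lemma_compact} and Lemma~\ref{lemma_CI} place in $\mathcal U$. You spell out the first term and the regularity check, both of which the paper leaves implicit. Your reading of the last line of Lemma~\ref{lemma_CI}'s proof as $(E_0\otimes I_{\mathcal E})(\mathcal M^\perp)\subseteq\mathcal U$ is the intended one, since inclusion in $\mathcal M^\perp$ does not hold in general.
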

\begin{proof}
	Let us denote $
	\overline{\operatorname{span}}
	\left\{
	\bfM_{\bm z}^\alpha(\mathcal M^\perp)
	\ :\ \alpha\in\mathbb Z_+^d
	\right\}$ by $\mathcal U$. Taking $\alpha = 0$, we see that $\mathcal{U}$ contains $\mathcal{M}^\perp$. It is clear that any reducing subspace containing $\mathcal M^{\perp}$ will contain $\mathcal U.$ We only need to that $\mathcal{U}$ is a reducing subspace. Clearly, $\mathcal{U}$ is invariant under $M_{z_{i}}$ for each $i = 1, \dots d.$ Thus, it remains to show that $\mathcal{U}$ is invariant under $M_{z_{i}}^*$ for each $i.$  
	Let $\beta, \gamma\in\mathbb{Z}_{+}^d$. By an application of Lemma~\ref{lemma_compact} and then Lemma \ref{lemma_CI}, we obtain 
	\[
	\left( (\bfM_{\bm z}^\gamma)^{*} \bfM_{\bm z}^\beta - \bfM_{\bm z}^\beta (\bfM_{\bm z}^\gamma)^{*} \right)(\mathcal M^\perp)
	\, \subseteq  \, \mathcal U.
	\]
	Thus the space $\mathcal U$ is invariant under $M_{z_{i}}^{*}$ for each $i =1, \dots, d.$ 
\end{proof}

We also need the following elementary lemma concerning reducing subspaces of a unitarily invariant space. 

\begin{lemma} \label{lem:reducing subspace}
	Let \(k\) be a unitarily invariant kernel on \(\mathbb B_d\) such that $M_{z_{i}}$ is bounded on $H_k$ for all $i=1, \dots,d.$ Let
	\(\mathcal E\) be a Hilbert space. If \(\mathcal M\) is a reducing subspace for $\bfM_{\bm z} \otimes I$ on \( H_k\otimes \mathcal E\), then there exists a closed subspace
	\(\ell\subseteq \mathcal E\) such that
	\[
	\mathcal M \,=\,  H_k\otimes \ell .
	\]
	Conversely, every subspace of the form \( H_k\otimes \ell\), with
	\(\ell\subseteq \mathcal E\) closed, is reducing for
	\(\bfM_{\bm z}\) on $H_k \otimes \mathcal E$.
\end{lemma}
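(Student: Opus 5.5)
The plan is to exploit the vacuum vector and the projection $E_0$ onto the constants. Suppose $\mathcal M\subseteq H_k\otimes\mathcal E$ is reducing for $\bfM_{\bm z}\otimes I$, and set
\[
\ell \,:=\, \{\, e\in\mathcal E \ :\ 1\otimes e\in\mathcal M\,\},
\]
which is a closed subspace of $\mathcal E$ because $e\mapsto 1\otimes e$ is an isometry up to the constant $\|1\|_{H_k}=a_0^{1/2}=1$. The inclusion $H_k\otimes\ell\subseteq\mathcal M$ is immediate. Indeed, $\mathcal M$ is invariant under $M_{z_i}\otimes I$, so it contains $p\otimes e$ for every polynomial $p$ and every $e\in\ell$. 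Polynomials are dense in $H_k$ for a unitarily invariant kernel of the form \eqref{unitarily invariant}, since the monomials form an orthogonal basis. Hence the closure gives $H_k\otimes\ell\subseteq\mathcal M$.

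For the reverse inclusion I would use the fact that $\mathcal M$ is also invariant under every $(\bfM_{\bm z}^{\alpha})^*\otimes I$. The orthogonal projection onto $\mathcal M$ therefore commutes with every operator in the von Neumann algebra generated by $\bfM_{\bm z}\otimes I$.

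\textbf{Key step (the main obstacle).} One needs $E_0\otimes I$ to lie in that algebra, or at least to leave $\mathcal M$ invariant. For a CNP kernel this follows from the strongly convergent identity
\[
\sum_{\alpha\neq 0} b_\alpha \bfM_{\bm z}^\alpha(\bfM_{\bm z}^\alpha)^*\,=\,I-E_0,
\]
which was used in the proof of Lemma~\ref{lemma_CI}. The lemma, however, only assumes unitary invariance and boundedness of the $M_{z_i}$, so I would instead argue directly with the monomial basis. Because the monomials are orthogonal, each $z^\alpha$ is an eigenvector of $\sum_i M_{z_i}^*M_{z_i}$, and more precisely
\[
M_{z_i}^* z^{\alpha} \,=\, \frac{\alpha_i\, a_{|\alpha|}}{|\alpha|\, a_{|\alpha|-1}}\, z^{\alpha-e_i}\quad(\alpha_i\ge 1),
\]
with $M_{z_i}^*$ killing the constants. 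Consequently $E_0$ is the spectral projection onto the kernel of the positive operator $\sum_i M_{z_i}M_{z_i}^*$, whose kernel is exactly the constants because every nonconstant monomial has positive weight. Spectral projections of a self-adjoint element of a von Neumann algebra lie in that algebra. Thus $E_0\otimes I$ commutes with the projection onto $\mathcal M$, and so $(E_0\otimes I)\mathcal M\subseteq\mathcal M$.

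With this in hand, take $f=\sum_\alpha z^\alpha\otimes e_\alpha\in\mathcal M$, expanded along the orthogonal monomial basis. Since $(\bfM_{\bm z}^\alpha)^*z^\alpha$ is a nonzero multiple $c_\alpha$ of the constant $1$, and $(\bfM_{\bm z}^\alpha)^*z^\beta$ is either zero or nonconstant when $\beta\neq\alpha$ and $|\beta|\ge|\alpha|$, we get
\[
(E_0\otimes I)\bigl((\bfM_{\bm z}^\alpha)^*\otimes I\bigr)f \,=\, c_\alpha\, 1\otimes e_\alpha\in\mathcal M.
\]
Hence every coefficient $e_\alpha$ lies in $\ell$, and $f\in H_k\otimes\ell$. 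This proves $\mathcal M=H_k\otimes\ell$.

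The converse is routine: $H_k\otimes\ell$ is invariant under $M_{z_i}\otimes I$ and under $M_{z_i}^*\otimes I$, since both act as the identity on the second tensor factor.
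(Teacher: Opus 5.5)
Your proof is correct and rests on the same key fact as the paper's: $\bigcap_i \ker(M_{z_i}^*\otimes I)=1\otimes\mathcal E$, and this subspace is preserved by $P_{\mathcal M}$. The paper gets this in one line from $M_{z_i}^*P_{\mathcal M}(1\otimes e)=P_{\mathcal M}M_{z_i}^*(1\otimes e)=0$, so your von Neumann algebra and spectral projection detour is not needed. After that step, the paper pushes $P_{\mathcal M}(1\otimes e)=1\otimes Pe$ forward with $\bfM_{\bm z}^\alpha$ to get $P_{\mathcal M}=I\otimes P$, whereas you extract Taylor coefficients with $E_0(\bfM_{\bm z}^\alpha)^*$; this is an equivalent finish.

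One harmless slip: the correct formula is $M_{z_i}^*z^\alpha=\frac{\alpha_i\,a_{|\alpha|-1}}{|\alpha|\,a_{|\alpha|}}\,z^{\alpha-e_i}$, so your ratio of the $a_n$ is inverted, but only the positivity of this coefficient is used.
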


\begin{proof}
	First observe that
	\[
	\bigcap_{i=1}^d \ker M_{z_{i}}^*
	\,=\,
	1\otimes\mathcal E.
	\]
	Let \(\mathcal M\subseteq  H_k\otimes\mathcal E\) be reducing for \(\bfM_{\bm z}\), and let \(P_{\mathcal M}\) be the orthogonal projection of $H_k \otimes \mathcal E$ onto $\mathcal M$. Since \(\mathcal M\) reduces \(\bfM_{\bm z}\), we have
	\[
	P_{\mathcal M} M_{z_{i}} 
	\,=\,
	M_{z_{i}} P_{\mathcal M}, \qquad P_{\mathcal M} M_{z_{i}}^{*} \,=\, M_{z_{i}}^{*} P_{\mathcal M}.
	\]
	for all $i=1, \dots, d.$ For \(e\in\mathcal E\),
	\[
	M_{z_{i}}^{*} P_{\mathcal M} (1\otimes e) \,=\, P_{\mathcal M} M_{z_{i}}^{*} (1\otimes e) \,=\, 0.
	\]
	Thus
	\[
	P_{\mathcal M} (1\otimes e) \,\in\, \bigcap_i\ker M_{z_{i}}^{*} \,=\, 1\otimes\mathcal E.
	\]
	Hence there is an operator \(P\in B(\mathcal E)\) such that
	\[
	P_{\mathcal M} (1\otimes e) \,=\, 1\otimes Pe .
	\]
	Since \(P_{\mathcal M}\) is an orthogonal projection, \(P\) is an orthogonal projection. Put $ \ell=\operatorname{ran}P. $
	Then \(\ell\) is closed. Now, for every multi-index \(\alpha\),
	\[
	P_{\mathcal M} (z^\alpha\otimes e)
	\,=\,
	P_{\mathcal M} \bfM_{\bm z}^\alpha(1\otimes e)
	\,=\,
	\bfM_{\bm z}^\alpha P_{\mathcal M} (1\otimes e)
	\,=\,
	\bfM_{\bm z}^\alpha(1\otimes Pe)
	\,=\,
	z^\alpha\otimes Pe .
	\]
	By density of vector-valued polynomials,
	\[
	P_{\mathcal M} \,=\, I_{H_k}\otimes P.
	\]
	Therefore
	\[
	\mathcal M \,=\, \operatorname{ran}Q
	\,=\, 
	 H_k\otimes \operatorname{ran}P
	\,=\,
	 H_k\otimes \ell .
	\]
	Conversely, every subspace of the form \(H_k\otimes\ell\), with
	\(\ell\subseteq\mathcal E\) closed, is clearly reducing for \(\bfM_{\bm z}\).
	Hence the reducing subspaces are exactly
	\[
	 H_k\otimes \ell ,
	\qquad \ell\subseteq\mathcal E \text{ closed}.
	\]
\end{proof}

\section{Proofs of Main Results}\label{section_proof}
As the title suggests, we prove all the main results in this section. We first require the following lower estimate for the Taylor joint spectrum. The proof of this theorem is motivated by \cite{DEHS}. 
\begin{theorem}\label{theorem_tylor_lower}
	Let $k$ be a unitarily invariant CNP kernel on $\mathbb B_d$. Let ${\bfT}=(T_1, \cdots, T_d)$ be a $1/k$-contraction such that $ \dim({\rm Ran} \Delta_{\bfT}) < \infty.$ Let $
	\theta_{{\bfT}} : \mathbb{B}_d \to \mathcal{B}(\mathcal{D}_{{\tilde{\bfT}}}, {\Delta}_{\bfT}) $
	be the characteristic function of ${\bfT}$. Then
	\[
	\sigma(\bfT) \supset E_{\theta_{\bfT}}\cup F_{\theta_{\bfT}}.
	\]
	
\end{theorem}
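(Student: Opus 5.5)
We argue by contraposition: we show that every point of $\overline{\mathbb B}_d\setminus\sigma(\bfT)$ lies outside both $E_{\theta_{\bfT}}$ and $F_{\theta_{\bfT}}$. Fix $\bm\lambda\in\overline{\mathbb B}_d\setminus\sigma(\bfT)$. By Proposition~\ref{prop:continuous-extension-characteristic-function}, $\theta_{\bfT}$ extends continuously to $\overline{\mathbb B}_d\setminus\sigma(\bfT)$, which is relatively open in $\overline{\mathbb B}_d$. Hence $\lim_{\bm z\to\bm\lambda,\,\bm z\in\mathbb B_d}\theta_{\bfT}(\bm z)$ exists and equals $\widehat\theta_{\bfT}(\bm\lambda)$, so $\bm\lambda\notin F_{\theta_{\bfT}}$. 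It remains to show that $\widehat\theta_{\bfT}(\bm\lambda)$ is surjective onto $\overline{\rm Ran}\,\Delta_{\bfT}$, which gives $\bm\lambda\notin E_{\theta_{\bfT}}$.

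For surjectivity, the key step is an algebraic identity for the extended characteristic function, valid at every $\bm z\in\overline{\mathbb B}_d\setminus\sigma(\bfT)$. Following the computation in \cite{NF} and \cite{DEHS}, we use \eqref{defect} together with $D_{\tilde\bfT}^2=I-\tilde\bfT^*\tilde\bfT$ and $\Delta_\bfT^2=I-\tilde\bfT\tilde\bfT^*$ to expand
\[
\widehat\theta_{\bfT}(\bm z)\widehat\theta_{\bfT}(\bm w)^*
\]
and obtain the standard formula
\[
I-\widehat\theta_{\bfT}(\bm z)\widehat\theta_{\bfT}(\bm w)^* \,=\, \bigl(1-\langle\bfZ,\bfW\rangle\bigr)\,\Delta_\bfT(I-\bfZ\tilde\bfT^*)^{-1}(I-\tilde\bfT\bfW^*)^{-1}\Delta_\bfT ,
\]
where $\langle\bfZ,\bfW\rangle=\sum_{\alpha\neq 0} b_\alpha\bm z^\alpha\overline{\bm w}^\alpha=\sum_n b_n\langle\bm z,\bm w\rangle^n=1-1/k(\bm z,\bm w)$. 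On the open ball this identity is purely algebraic. Both sides are continuous on $\overline{\mathbb B}_d\setminus\sigma(\bfT)$: the right side by Lemma~\ref{lem:sum-bn-leq-one} and the invertibility shown in Proposition~\ref{prop:continuous-extension-characteristic-function}. So the identity persists there by continuity, or we can simply redo the algebra directly at boundary points.

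Next we specialize to $\bm z=\bm w=\bm\lambda$. If $|\bm\lambda|<1$, the factor $1-\langle\bfZ,\bfZ\rangle=1/k(\bm\lambda,\bm\lambda)$ is positive. The right side is then $\Delta_\bfT X^*X\Delta_\bfT$ up to this positive factor, with $X=(I-\tilde\bfT\bfZ^*)^{-1}$ invertible, so it is injective on $\overline{\rm Ran}\,\Delta_\bfT$. We want more, namely that $\widehat\theta_\bfT(\bm\lambda)$ has dense range, and here the finite dimensionality of $\mathrm{Ran}\,\Delta_\bfT$ is what makes dense range equivalent to surjective. The identity above only gives $\|\widehat\theta(\bm\lambda)^*x\|<\|x\|$, which says nothing about injectivity of $\widehat\theta(\bm\lambda)^*$. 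So we argue instead through a left-inverse construction in the spirit of \cite{DEHS}: we produce an explicit operator $\Omega(\bm\lambda)$ with $\widehat\theta_\bfT(\bm\lambda)\Omega(\bm\lambda)=I$ on $\overline{\rm Ran}\,\Delta_\bfT$. A natural candidate is the ``inverse transfer function''
\[
\Omega(\bm z)=\Bigl(-\tilde\bfT^*+D_{\tilde\bfT}\,\bfZ^*(I-\tilde\bfT\bfZ^*)^{-1}\ldots\Bigr)\Delta_\bfT^{-1},
\]
built from the unitary colligation $\begin{pmatrix}\tilde\bfT^* & D_{\tilde\bfT}\\ \Delta_\bfT & -\tilde\bfT\end{pmatrix}$. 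The point is that the transfer function of a unitary colligation is invertible exactly where the ``main operator'' shifted by $\bfZ$ is invertible. Concretely, Schur-complement arguments show that $\widehat\theta_\bfT(\bm\lambda)$ is surjective if and only if a certain operator of the form $I-\bfZ\tilde\bfT^*$ restricted appropriately is surjective, and the latter holds because it is invertible.

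The main obstacle is this last Schur-complement step. We must relate the cokernel of $\widehat\theta_\bfT(\bm\lambda)$ to the kernel of $(I-\bfZ\tilde\bfT^*)^*$ or to joint eigenvectors of $\bfT^*$, while carefully handling the infinite tuple and boundary points where $1-\langle\bfZ,\bfZ\rangle=0$. Our first attempt is the cleanest contrapositive route. Suppose $x\in\overline{\rm Ran}\,\Delta_\bfT$ is nonzero with $\widehat\theta_\bfT(\bm\lambda)^*x=0$. Set $h=(I-\tilde\bfT\bfZ^*)^{-1}\Delta_\bfT x$. Using the colligation relations, we then check that $\tilde\bfT^* h=\bfZ^* h$; that is, $\sum_\alpha b_\alpha\overline{\bm\lambda}^\alpha(\bfT^{\alpha})^*h=\ldots$, and by unitary invariance and the fact that $\tilde\bfT^*$ contains the monomials $z_j$, this yields $T_j^*h=\overline{\lambda_j}h$. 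Finally, $h\neq 0$ because $\Delta_\bfT x\neq 0$, which follows since $\Delta_\bfT$ is injective on its range closure. Then $\overline{\bm\lambda}\in\sigma_p(\bfT^*)\subset\sigma(\bfT^*)$ by \eqref{equation_Tylor2}, so $\bm\lambda\in\sigma(\bfT)$ by \eqref{equation_Tylor1}, a contradiction.
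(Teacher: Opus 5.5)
Your final contrapositive paragraph is correct and is essentially the paper's proof. The step you defer (``we then check'') is the direct calculation $D_{\tilde{\bfT}}\,\widehat{\theta}_{\bfT}(\bm\lambda)^*=(\bfZ^*-\tilde{\bfT}^*)(I_{\mathcal H}-\tilde{\bfT}\bfZ^*)^{-1}\Delta_{\bfT}$, which follows from \eqref{defect} and $D_{\tilde{\bfT}}^2=I-\tilde{\bfT}^*\tilde{\bfT}$. After that, the joint-eigenvector contradiction comes from reading $\tilde{\bfT}^*h=\bfZ^*h$ componentwise on the $|\alpha|=1$ entries. This uses $b_1=a_1>0$, not unitary invariance. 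Finite dimensionality of $\operatorname{Ran}\Delta_{\bfT}$ then gives surjectivity, exactly as in the paper. The identity for $I-\widehat\theta_{\bfT}(\bm z)\widehat\theta_{\bfT}(\bm w)^*$ and the unfinished $\Omega$/Schur-complement detour are unnecessary and can be dropped.
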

\begin{proof}
	
	It is easy to see that $F_{\theta_{\bfT}} \subset \sigma(\bfT)$. Indeed, if there is a point 
	$\bm{\lambda} \in \overline{\mathbb{B}}_d$ which is not in $\sigma(\bfT)$, then, by 
	Proposition \ref{prop:continuous-extension-characteristic-function}, the characteristic function of $\bfT$ extends continuously at $\bm\lambda$, and hence $\bm{\lambda} \notin F_{\theta_{\bfT}}$. Thus, our goal is now to show that $E_{\theta_{\bfT}} \subset \sigma(\bfT)$. 
	
	\smallskip
	
	For this, we shall show that for each point $\bm{z} \in \overline{\mathbb{B}}_d$ which is not in the Taylor joint spectrum of $\bfT$, the characteristic function $\widehat\theta_{\bfT}(\bm z)$ is surjective. We know by Proposition  \ref{prop:continuous-extension-characteristic-function} that the characteristic function of $\bfT$ extends continuously to $
	\Omega := \overline{\mathbb{B}}_{d} \setminus \sigma(\bfT) 
	\,\subseteq\,
	\overline{\mathbb{B}}_{d}.$
	It is enough to show that $\widehat{\theta}_{\bfT}(\bm{z})^*$ is injective, since $\operatorname{Ran}\Delta_{\bfT}$ is finite-dimensional. Recall that
	\[
	\widehat{\theta}_{{\bfT}}(\bm z)^*
	\,=\,
	- {\tilde{\bfT}}^* + D_{{\tilde{\bfT}}} \, {\bfZ}^*(I_{\mathcal{H}} - {\tilde{\bfT}} \bfZ^*)^{-1}\Delta_{{\bfT}}
	\]
	for all $\bm z \in \Omega$. A direct calculation yields the following 
	\[
	\mathcal{D}_{\tilde{\bfT}} \, \widehat{\theta}_{{\bfT}}(\bm z)^*
	\,=\,
	({\bfZ}^* - {\tilde{\bfT}}^*)(I_{\mathcal{H}} - {\tilde{\bfT}} \bfZ^*)^{-1}\Delta_{{\bfT}}.
	\]
	If $y \in \operatorname{Ran}\Delta_{\bfT}$, then there exists
	$x \in \cH$ such that $y = \Delta_{\bfT} \, x$.  Fix $\bm z\in\Omega$, assume that $\widehat{\theta}_{{\bfT}}(\bm z)^* y = 0$. Then
	$ \mathcal{D}_{\tilde{\bfT}}\,\widehat{\theta}_{\bfT}(\bm z)^* y = 0.$ Using the above equation, this is equivalent to
	$
	(\bfZ^* - \tilde{\bfT}^*)(I_{\mathcal{H}} - \tilde{\bfT}\bfZ^*)^{-1}
	\bigl(I_{\mathcal{H}} - \tilde{\boldsymbol{T}}\,\tilde{\bfT}^{*}\bigr)x = 0$.

    \smallskip
    
\noindent\textbf{Claim} $(\bfZ^* - \tilde{\bfT}^*)$ is injective on $\overline{\mathbb B}_d\setminus\sigma(\bfT).$ 

\smallskip
    
    To prove the claim, let $h\in\mathcal{H}$ be such that $({\bfZ}^* - \tilde{{\bfT}}^*)h = 0$. Then
	$
	\psi_{\alpha}({\bfT})^* h = \overline{\psi_{\alpha}(\bm z)} \, h,$ for all $\alpha\in\mathbb{Z}_+^d$. 
	By the definition of $\psi_{\alpha}$, we get 
	\[
	\sqrt{b_{\alpha}}\, ({{\bfT}^{\alpha}})^* h = \sqrt{b_{\alpha}}\, \overline{{\bm z}^{\alpha}}\, h \quad \text{ for all } \alpha\in\mathbb{Z}_+^d.
	\]
	Since $b_{\alpha} \neq 0$ whenever $|\alpha| = 1$, we obtain
	\[
	T_i^* h = \overline{z}_i\, h, \qquad i = 1,\dots,d.
	\]
	Hence,
	$
	(\overline{z}_1,\dots,\overline{z}_d) \in \sigma_p(T_1^*,\dots,T_d^*).$
	Using \eqref{equation_Tylor1} and \eqref{equation_Tylor2}, it follows that
	\[
	(z_1,\dots,z_d) 
	\,\in\, \sigma(T_1,\dots,T_d),
	\]
	which is a contradiction since $\bm z$ lies outside the spectrum. This completes the proof of the claim.

   \noindent By the claim, the equality $
(\mathbf Z^*-\tilde{\mathbf T}^*)
(I_{\mathcal H}-\tilde{\mathbf T}\mathbf Z^*)^{-1}
\bigl(I_{\mathcal H}-\tilde{\mathbf T}\tilde{\mathbf T}^*\bigr)x=0$
forces $
\bigl(I_{\mathcal H}-\tilde{\mathbf T}\tilde{\mathbf T}^*\bigr)x=0.$ Hence,
	\[
	\|y\|^2 
	\,=\,
	\langle \Delta_{\bfT}x, \Delta_{\bfT}x \rangle 
	\,=\, 
	\langle \Delta_{\bfT}^2 x, x \rangle
	\,=\,
	\big\langle \bigl(I_{\mathcal{H}} - \tilde{\bfT}\,\tilde{\bfT}^{*}\bigr)x, x \big\rangle
	\,=\, 0.
	\]
	Therefore, $y = 0$, and hence $\widehat{\theta}_{\bfT}(\bm z)^*$ is injective. Thus, the proof is complete.  
	
\end{proof}

\underline{\bf Proof of Theorem A:} Part~(i) is proved in Proposition \ref{prop:continuous-extension-characteristic-function}, part~(ii) is proved in \cref{theorem_tylor_lower}, and part~(iii) follows from Lemma \ref{lemma_upper}.

\quad 

To prove our second result, we first establish the following result. 
\begin{theorem} \label{thm:BLH}
	Let $k$ be a regular unitary invariant CNP kernel on $\mathbb{B}_{d}$. Let $\mathcal E$ be a Hilbert space and $\mathcal M \subseteq H_k \otimes \mathcal E$ be a closed subspace invariant under $M_{z_{i}} \otimes I_{\mathcal E}$ on $H_k \otimes \mathcal{E}$  for all $i=1, \dots, d.$ Then 
	\[
	\mathcal M \,=\, (H_k \otimes \cE_{1}) \oplus (I_{H_k} \otimes \tau) \theta_{\bfT}  (H_k \otimes \mathcal D_{\tilde{\bfT}}),
	\]
	where $\cE_{1}$ is a closed subspace of $\cE$ such that $H_k \otimes \cE_{1}$ is the largest reducing subspace contained in $\cM,$  $\theta_{\bfT}$ is the characteristic function of the pure $1/k$-contraction 
	\[
	\bfT \,=\, \Big(P_{\mathcal M^{\perp}} (M_{z_{1}} \otimes I_{\mathcal E})|_{\mathcal M^{\perp}}, \dots, P_{\mathcal M^{\perp}} (M_{z_{d}} \otimes I_{\mathcal E})|_{\mathcal M^{\perp}} \Big),
	\]
	and $\tau: \overline{{\rm Ran}} \Delta_{\bfT} \to \mathcal E \ominus \cE_{1}$ is a unitary. 
\end{theorem}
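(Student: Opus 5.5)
We will realize $H_k\otimes\mathcal E$ as the minimal dilation space of the pure $1/k$-contraction $\bfT$ (the compression of $\bfM_{\bm z}\otimes I_{\mathcal E}$ to $\mathcal M^\perp$) and then use the characteristic function results of \cite{BJ}. First, $\bfT$ is a pure $1/k$-contraction, since it is the compression of $\bfM_{\bm z}\otimes I_{\mathcal E}$ to the co-invariant subspace $\mathcal M^\perp$. By \cite{BJ}, the dilation map
\[
L_{\bfT}:\mathcal M^\perp\to H_k\otimes \overline{\rm Ran}\,\Delta_{\bfT},\qquad L_{\bfT}h=\sum_{\alpha}a_\alpha\,\bm z^\alpha\otimes \Delta_{\bfT}(\bfT^\alpha)^*h,
\]
is an isometry intertwining $\bfT^*$ with $(\bfM_{\bm z}\otimes I)^*$. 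Moreover, $\operatorname{Ran}L_{\bfT}$ is the orthogonal complement of $\theta_{\bfT}(H_k\otimes\mathcal D_{\tilde\bfT})$, and $M_{\theta_{\bfT}}$ is a partial isometry (inner). This is the functional model from \cite[Theorem 4.11]{BJ} and its companion statements.

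Next we split off the reducing part. Let $\mathcal R$ be the smallest reducing subspace containing $\mathcal M^\perp$. By Proposition~\ref{lem:smallest reducing subspace} (which we use under the regularity hypothesis, giving the assumptions of Lemma~\ref{lemma_compact}), we have $\mathcal R=\overline{\operatorname{span}}\{\bfM_{\bm z}^\alpha\mathcal M^\perp\}$. By Lemma~\ref{lem:reducing subspace}, $\mathcal R=H_k\otimes\mathcal E_0$ for a closed $\mathcal E_0\subseteq\mathcal E$. Set $\mathcal E_1=\mathcal E\ominus\mathcal E_0$. Then $H_k\otimes\mathcal E_1=\mathcal R^\perp\subseteq\mathcal M$, and it is the largest reducing subspace inside $\mathcal M$, since any reducing subspace contained in $\mathcal M$ has reducing orthocomplement containing $\mathcal M^\perp$. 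Thus
\[
\mathcal M=(H_k\otimes\mathcal E_1)\oplus(\mathcal M\cap (H_k\otimes\mathcal E_0)),
\]
and it remains to identify $\mathcal M\ominus(H_k\otimes\mathcal E_1)=(H_k\otimes\mathcal E_0)\ominus\mathcal M^\perp$.

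Now the inclusion $\iota:\mathcal M^\perp\hookrightarrow H_k\otimes\mathcal E_0$ is an isometric co-invariant embedding, and it is minimal in the sense that $\bfM_{\bm z}$-translates of $\mathcal M^\perp$ span $H_k\otimes\mathcal E_0$. The dilation $L_{\bfT}$ is also minimal. The uniqueness of minimal dilations should then yield a unitary $W:H_k\otimes\overline{\rm Ran}\,\Delta_{\bfT}\to H_k\otimes\mathcal E_0$ with $WL_{\bfT}=\iota$ that intertwines the $\bfM_{\bm z}$'s. To get $W$, we compare inner products
\[
\langle \bm z^\alpha \otimes \cdot\,,\ \bm z^\beta\otimes\cdot\rangle
\]
on the generating sets, which reduces to $\bfT^\alpha$, $(\bfT^\beta)^*$ relations on $\mathcal M^\perp$. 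A unitary commuting with all $M_{z_i}$ and $M_{z_i}^*$ between such spaces is of the form $I_{H_k}\otimes\tau$: it maps constants, namely $\bigcap_i\ker M_{z_i}^*$, to constants, by the argument in Lemma~\ref{lem:reducing subspace}. Hence $W=I\otimes\tau$ with $\tau:\overline{\rm Ran}\,\Delta_{\bfT}\to\mathcal E_0$ unitary. Applying $W$ to the model decomposition gives
\[
H_k\otimes\mathcal E_0=\mathcal M^\perp\oplus(I\otimes\tau)\theta_{\bfT}(H_k\otimes\mathcal D_{\tilde\bfT}),
\]
which yields the claim. The statement's $\mathcal E\ominus\mathcal E_1$ is exactly $\mathcal E_0$.

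We expect the main obstacle to be constructing $W$, that is, uniqueness of the minimal dilation in the CNP setting. Minimality of $L_{\bfT}$ requires showing that $\overline{\operatorname{span}}\{\bfM_{\bm z}^\alpha L_{\bfT}\mathcal M^\perp\}$ is everything. We would do this by applying Proposition~\ref{lem:smallest reducing subspace} once more to $L_{\bfT}\mathcal M^\perp$, which is co-invariant. The smallest reducing subspace containing it is $H_k\otimes\ell$, and $\ell$ must be all of $\overline{\rm Ran}\,\Delta_{\bfT}$ because $(E_0\otimes I)L_{\bfT}h=1\otimes\Delta_{\bfT}h$. The inner products match on the two sides since both embeddings are isometric and intertwine the adjoints, so that $\langle \bfM^\alpha u,\bfM^\beta v\rangle=\langle (\bfM^{\beta})^{*}\bfM^\alpha u,v\rangle$. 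Computing $(\bfM^\beta)^*\bfM^\alpha$ on a co-invariant subspace is delicate because the two operators do not commute. We would avoid this by instead defining $W$ on the constants via $\tau(\Delta_{\bfT}h)=(E_0\otimes I)h$ for $h\in\mathcal M^\perp$, checking that $\tau$ is isometric using $\Delta_{\bfT}^2=P_{\mathcal M^\perp}(E_0\otimes I)|_{\mathcal M^\perp}$ together with Lemma~\ref{lemma_CI}, and then extending $\tau$ as $I\otimes\tau$.
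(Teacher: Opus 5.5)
Your proposal is correct and follows the paper's proof. You split off $\mathcal R=H_k\otimes\mathcal E_0$ via Proposition~\ref{lem:smallest reducing subspace} and Lemma~\ref{lem:reducing subspace}, relate the canonical dilation $L_{\bfT}$ to the inclusion $\mathcal M^\perp\hookrightarrow H_k\otimes\mathcal E_0$ by a unitary of the form $I_{H_k}\otimes\tau$, and transport the model identity $(\operatorname{Ran}L_{\bfT})^\perp=\theta_{\bfT}(H_k\otimes\mathcal D_{\tilde{\bfT}})$ from \cite{BJ}. The one difference is that the paper simply invokes uniqueness of minimal dilations and constancy of unitary multipliers, whereas you build $\tau(\Delta_{\bfT}h)=(E_0\otimes I)h$ explicitly; this works once you add two routine checks you left implicit, namely $(I_{H_k}\otimes\tau)L_{\bfT}=\iota$ (compare Taylor coefficients using $\|\bm z^\alpha\|^2=1/a_\alpha$) and surjectivity of $\tau$ onto $\mathcal E_0$ (because $H_k\otimes\operatorname{Ran}\tau$ is then a reducing subspace containing $\mathcal M^\perp$).
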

\begin{proof}
Let $\mathcal R = \overline{\rm span} \big\{ \bfM_{\bm z}^{\alpha} (\mathcal M^{\perp}) \ :\ \alpha \in \mathbb{Z}_{+}^{d} \big\}.$ By Proposition \ref{lem:smallest reducing subspace}, $\mathcal R$ is the smallest reducing subspace containing $\mathcal M^{\perp}.$ Then by Lemma \ref{lem:reducing subspace}, we get $\mathcal R = H_k \otimes (\mathcal R \cap \cE)$. 

    \smallskip
	
	Since $\bfT = P_{\mathcal M^{\perp}} (\bfM_{\bm z} \otimes I_{\cE})|_{\mathcal M^{\perp}}$ is a pure $1/k$-contraction, $V_{\bfT}$ is an isometry, see \cite[Theorem 2.2]{BJ} and \cite[Theorem 1.3]{AE}.
	Moreover, this dilation is
	minimal. Indeed, any reducing subspace of $H_k \otimes I_{\overline{\rm Ran} \Delta_{\bfT}}$ which contains \(V_T\mathcal H\) must contain the constant coefficients
	\(\Delta_T h\) \  \( (h\in\mathcal H)\), and hence must be all of
	\(H_k\otimes\overline{\rm Ran} \Delta_{\bfT}\). Therefore, we can write 
	\[
	H_k\otimes\overline{\rm Ran} \Delta_{\bfT} 
	\,=\, 
	\overline{\rm span} \big\{ (\bfM_{\bm z}^{\alpha} \otimes I_{\overline{\rm Ran} \Delta_{\bfT}}) V_{\bfT} h\ :\ h\in \mathcal M^{\perp}, \, \alpha \in \mathbb{Z}^{d}_{+} \big\}. 
	\]
	Since the inclusion map $i: M^\perp \hookrightarrow H_k\otimes (\mathcal R \cap \mathcal E)$ and the canonical dilation $V_{\bfT}: \mathcal M^{\perp} \to H_k \otimes \overline{\rm Ran} \Delta_{\bfT}$ are both minimal diltions for $\bfT$, there is a unique unitary operator $ U: H_k \otimes \overline{\rm Ran}\Delta_{\bfT} \to H_k \otimes (\mathcal R \cap \mathcal E)$ such that 
	\[
	U V_{\bfT} = i, \quad \text { and } \quad U (M_{z_{j}} \otimes I_{\overline{\rm Ran} \Delta_{\bfT}}) 
	\,=\,
	(M_{z_{j}} \otimes I_{( \mathcal R \cap \cE)} ) U,  
	\]
	for $j=1,\cdots, d$. Since $U$ intertwines $\bfM_{\bm z} \otimes I$ on suitable spaces, $U = M_{\varphi}$ for some $\varphi \in {\rm Mult}(H_k \otimes \overline{\rm Ran}\Delta_{\bfT}, H_k \otimes (\mathcal{R} \cap \cE)).$ There are no non-constant unitary multipliers other than constant unitaries. Therefore, $U = I_{H_k} \otimes \tau$ for some unitary $\tau: \overline{{\rm Ran}} \Delta_{\bfT} \to (\mathcal R \cap \mathcal E).$ Now note that
	\begin{align*}
	(I_{H_k} \otimes \tau) M_{\theta_{\bfT}}  (H_k \otimes \mathcal D_{\tilde{\bfT}})
	& \,=\,
	(I_{H_k} \otimes \tau) ((\rm Ran V_{\bfT})^{\perp}) \\
	& \,=\, 
	\big( ( I_{H_k} \otimes \tau) (\rm Ran V_{\bfT}) \big)^{\perp} \\
	& \,=\,
	(H_k \otimes (\mathcal R \cap \cE)) \ominus \mathcal M^{\perp} \\
    & \,=\, \cM \ominus (H_k \otimes \cE_{1}),
	\end{align*}
	where $\cE_{1} = \mathcal E \ominus (\mathcal R \cap \mathcal E).$ Finally, since $H_k \otimes (\mathcal R \cap \cE)$ is the smallest reducing subspace containing $\cM^{\perp},$ $H_k \otimes \cE_{1}$ is the largest reducing subspace contained in $\cM.$ 
\end{proof}
The Drury--Arveson version of \cref{thm:BLH} was established in \cite{BES2}. As an application of \cref{thm:BLH}, we obtain that  every inner multiplier can be realized as the characteristic function of a pure $1/k$-contraction.

\begin{theorem}\label{theorem_main2}
Let $k$ be a regular unitarily invariant CNP kernel on $\mathbb{B}_{d}.$ Let $\cE$ and $ \cF$ be two Hilbert spaces and $\theta \in {\rm Mult}(H_k \otimes \cF, H_k \otimes \cE)$ be an inner multiplier. Then  
\[
\mathcal M 
\,:=\,
\theta (H_k \otimes \cF) \,=\, (H_k \otimes \cE_{1}) \oplus 
(I_{H_k} \otimes \tau) \theta_{\bfT} (H_k \otimes \mathcal{D}_{\tilde{\bfT}}), 
\]
where $\cE_{1}$ is a closed subspace of $\cE$ such that $H_k \otimes \cE_{1}$ is the largest reducing subspace contained in $\theta (H_k \otimes \cF)$, $\theta_{\bfT}$ is the characteristic function of the pure $1/k$-contraction 
\[
\bfT \,=\, \Big(P_{\mathcal M^{\perp}} (M_{z_{1}} \otimes I_{\mathcal E})|_{\mathcal M^{\perp}}, \dots, P_{\mathcal M^{\perp}} (M_{z_{d}} \otimes I_{\mathcal E})|_{\mathcal M^{\perp}} \Big),
\]
and $\tau: \overline{{\rm Ran}} \Delta_{\bfT} \to \mathcal E \ominus \mathcal E_{1}$ is a unitary. Therefore, there exists a partial isometry $V: \mathcal E_{1} \oplus \mathcal D_{\bfT} \to \mathcal F$ such that 
\[
I_{\mathcal E_{1}} \oplus \tau\, \theta_{\bfT} (\bm z)
 \ =\ 
\theta (\bm z) V \qquad \text{and} \qquad \theta(\bm z) 
\ =\  ( I_{\mathcal E_{1}} \oplus \tau\, \theta_{\bfT} (\bm z) ) V^{*} \quad \text{ for all } \bm z \in \mathbb{B}_{d}.
\]
\end{theorem}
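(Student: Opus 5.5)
The first identity is a direct application of \cref{thm:BLH} to $\mathcal M=\theta(H_k\otimes\cF)$. I will first check that $\mathcal M$ satisfies its hypotheses. Since $\theta$ is an inner multiplier in the sense of McCullough--Trent, $M_\theta$ is a partial isometry. Its range $\mathcal M$ is therefore closed. Because $M_\theta$ intertwines $\bfM_{\bm z}\otimes I_{\cF}$ and $\bfM_{\bm z}\otimes I_{\cE}$, the space $\mathcal M$ is also $\bfM_{\bm z}\otimes I_{\cE}$-invariant. \cref{thm:BLH} then gives the orthogonal decomposition $\mathcal M=(H_k\otimes\cE_1)\oplus(I_{H_k}\otimes\tau)\theta_{\bfT}(H_k\otimes\cD_{\tilde\bfT})$, with $\cE_1$, $\bfT$ and $\tau$ as stated.

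For the factorization, set $\Theta(\bm z):=I_{\cE_1}\oplus\tau\,\theta_{\bfT}(\bm z)\in\cB(\cE_1\oplus\cD_{\tilde\bfT},\cE)$. By \cite[Theorem 4.11]{BJ} together with purity, $\theta_{\bfT}$ is an inner multiplier whose range is $(\operatorname{Ran}V_{\bfT})^\perp$. Hence $M_\Theta$ is a partial isometry with range exactly $\mathcal M$, as computed in the proof of \cref{thm:BLH}. The two partial isometries $M_\theta$ and $M_\Theta$ thus have the same range $\mathcal M$, and $P_{\mathcal M}=M_\theta M_\theta^*=M_\Theta M_\Theta^*$.

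I would define $W:=M_\theta^*M_\Theta:H_k\otimes(\cE_1\oplus\cD_{\tilde\bfT})\to H_k\otimes\cF$. This operator has three properties.
\begin{enumerate}
\item It satisfies $M_\theta W=P_{\mathcal M}M_\Theta=M_\Theta$.
\item It satisfies $M_\Theta W^*=M_\Theta M_\Theta^* M_\theta=P_{\mathcal M}M_\theta=M_\theta$.
\item It is a partial isometry, since $W^*W=M_\Theta^*P_{\mathcal M}M_\Theta=M_\Theta^*M_\Theta$.
\end{enumerate}

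The main obstacle is showing that $W$ is a constant, i.e.\ $W=I_{H_k}\otimes V$. The operator $M_\theta^*$ does not commute with $M_{z_i}$, so $W$ is not a multiplier a priori. To get around this, I would use the intertwining relations $M_{z_i}M_\theta=M_\theta M_{z_i}$ and $M_{z_i}M_\Theta=M_\Theta M_{z_i}$. Both partial isometries have initial spaces $(\ker M_\theta)^\perp$ and $(\ker M_\Theta)^\perp$, and both have final space $\mathcal M$. Then $W$ restricted to $(\ker M_\Theta)^\perp$ is the unitary onto $(\ker M_\theta)^\perp$ obtained by composing $M_\Theta$ with the inverse of $M_\theta$ on $\mathcal M$. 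From this, $W M_{z_i}h=M_{z_i}Wh$ whenever both sides lie in the relevant initial spaces.

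Since these initial spaces need not be invariant, I would instead exploit the freedom in choosing $V$. By the McCullough--Trent description, inner multipliers can be taken with $\ker M_\theta=H_k\otimes\ker\theta(0)$-type structure. Concretely, I would first reduce to the case where $M_\theta$ is an isometry by restricting $\theta$ to $\cF_0:=\{f\in\cF: \theta f\ \text{is isometric on}\ H_k\otimes f\}$, using that $\ker M_\theta$ is reducing. Note that $\ker M_\theta$ is $M_{z}$-invariant since $M_\theta$ intertwines, and $\ker M_\theta$ is also $M_z^*$-invariant because $M_\theta^*M_\theta$ is a projection commuting with the $M_{z_i}$. \cref{lem:reducing subspace} then writes $\ker M_\theta=H_k\otimes\ell$, and similarly $\ker M_\Theta=H_k\otimes\ell'$. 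On the reducing complements $W$ is a unitary intertwining $\bfM_{\bm z}\otimes I$, hence a multiplier. A unitary multiplier is constant, by the same argument used in \cref{thm:BLH}, so it has the form $I\otimes V_0$. Extending $V_0$ by zero on $\ell'$ gives the partial isometry $V$ with $W=I_{H_k}\otimes V$. Evaluating $M_\theta W=M_\Theta$ and $M_\Theta W^*=M_\theta$ pointwise on constants $1\otimes e$ then yields $\theta(\bm z)V=\Theta(\bm z)$ and $\theta(\bm z)=\Theta(\bm z)V^*$ for all $\bm z\in\mathbb B_d$.
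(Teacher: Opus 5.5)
Your first paragraph is exactly the paper's proof, which consists only of applying Theorem~\ref{thm:BLH} to $\cM=\theta(H_k\otimes\cF)$. The paper gives no separate argument for the partial isometry $V$.

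\textbf{The gap.} Your identities for $W:=M_\theta^*M_\Theta$ are correct, but the step showing $W=I_{H_k}\otimes V$ fails. You claim $\ker M_\theta$ is reducing because $M_\theta^*M_\theta$ commutes with the $M_{z_i}$. This holds in the Hardy space, where $M_z^*M_z=I$ forces $P=M_\theta^*M_\theta$ to satisfy $M_z^*PM_z=P$. It is false in general CNP spaces.

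Take the Drury--Arveson kernel on $\mathbb B_2$ and $\theta(\bm z)=(z_1\ \ z_2)\colon\mathbb C^2\to\mathbb C$. Then $M_\theta M_\theta^*=I-E_0$, so $\theta$ is inner with $\cM=\{f:f(0)=0\}$. Yet $\ker M_\theta$ contains $(z_2,-z_1)$ and no nonzero constant, so by Lemma~\ref{lem:reducing subspace} it is not reducing. Your reduction to $\cF_0$ rests on this false claim.

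Worse, the target itself is false. In this example $\cE_1=0$, $\bfT=(0,0)$ on $\cM^\perp=\mathbb C$, $\tilde\bfT=0$ and $\tau=1$. Hence $\Theta(\bm z)=\theta_{\bfT}(\bm z)=(z_1,z_2,0,0,\dots)$. So $M_\Theta=M_\theta(I\otimes V)$ with $V$ the projection onto the first two coordinates, and
\[
W \,=\, M_\theta^*M_\theta(I\otimes V) \,=\, P_{(\ker M_\theta)^\perp}(I\otimes V).
\]
Its range is the non-reducing space $(\ker M_\theta)^\perp$, so $W$ is not of the form $I\otimes V'$ for any $V'$.

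\textbf{The fix.} Compare $\theta$ and $\Theta$ pointwise instead of through $M_\theta^*$. Pair $M_\theta M_\theta^*=P_\cM=M_\Theta M_\Theta^*$ against $k_{\bm w}\otimes e$ and $k_{\bm z}\otimes e'$. Using $M_\theta^*(k_{\bm w}\otimes e)=k_{\bm w}\otimes\theta(\bm w)^*e$ and $k(\bm z,\bm w)\neq0$ (irreducibility), this gives
\[
\theta(\bm z)\theta(\bm w)^* \,=\, \Theta(\bm z)\Theta(\bm w)^* \qquad (\bm z,\bm w\in\mathbb B_d).
\]
By this equality of Gram matrices, $\Theta(\bm w)^*e\mapsto\theta(\bm w)^*e$ extends to a well-defined isometry on $\mathcal L:=\overline{\operatorname{span}}\{\Theta(\bm w)^*e\}$. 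Setting it to $0$ on $\mathcal L^\perp$ gives a partial isometry $V$. Then:
\begin{enumerate}
\item $V\Theta(\bm w)^*=\theta(\bm w)^*$ gives $\theta(\bm w)=\Theta(\bm w)V^*$;
\item $V^*\theta(\bm w)^*=V^*V\Theta(\bm w)^*=\Theta(\bm w)^*$ gives $\Theta(\bm w)=\theta(\bm w)V$.
\end{enumerate}
So your $W$ equals $M_\theta^*M_\theta(I\otimes V)$, not $I\otimes V$. The extra factor $M_\theta^*M_\theta$ is exactly what prevents $W$ from being constant.
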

\begin{proof}
    Apply \cref{thm:BLH} for the closed invariant subspace $\cM = \theta (H_k \otimes \cF).$
\end{proof}

Now, we are ready to proof our second main result. 

\smallskip

\underline{\bf Proof of Theorem B:} This follows immediately as a consequence of \cref{theorem_tylor_lower} and \cref{theorem_main2}.

\section{Epilogue}

\cref{thmA,thmB} provide lower estimates for the Taylor joint spectrum of pure $1/k$-contractions and quotient modules, respectively. An upper estimate was obtained by Clou\^atre--Timko in \cite{CT}. In the case of the Drury--Arveson kernel, the equality between $\sigma(\bfT)$ and $E_{\theta_{\bfT}}\cup F_{\theta_{\bfT}}$ was established in \cite{DEHS}. Their proof relies on \cite[Lemma 5]{DEHS}, together with the corona property. We observe that the same lemma admits an analogue for CNP kernels, with the necessary modifications and essentially the same argument. For completeness, we record this version below.

\begin{lemma}\label{lem:determinant-ideal-CNP}
	Let $k$ be a unitarily invariant CNP kernel on $\mathbb B_d$. Let $\mathcal D$ be finite-dimensional Hilbert space with orthonormal basis $d_1,\ldots,d_N$ and $ \mathcal M\in \operatorname{Lat}(\bfM_{\bm z},H_k \otimes \mathcal D)$ an invariant subspace. 
	Let $\cE$ be a Hilbert space and $\theta\in \operatorname{Mult}(H_k \otimes \mathcal E,H_k \otimes \mathcal D)$ satisfy $ M_\theta (H_k \otimes \mathcal E) \subseteq \mathcal M.$
	Fix vectors $e_1,\ldots,e_N\in\mathcal E$ and denote by $\Theta=(\theta_{ij})_{i,j=1}^N \in M_{N} (\rm Mult (H_k))$ the matrix whose coefficients are determined by
	\[
	\theta(\bm z)e_j
	\,=\,
	\sum_{i=1}^N \theta_{ij}(\bm z)d_i, \qquad (j=1, \dots, N,\ \bm z \in \mathbb{B}_{d}) .
	\]
	Then $ \det\Theta\in I(\mathcal M),$
	where $
	I(\mathcal M):=\{f\in \operatorname{Mult}(H_k): f (H_k \otimes \mathcal D)\subseteq \mathcal M\}. $ If $\bm \lambda \in \overline{\mathbb{B}}_{d}$ is a point such that  $\widehat{\theta} (\bm \lambda)
	=
	\lim_{\substack{\bm z \to \lambda \\ \bm z \in \mathbb{B}_{d}}} \theta (\bm z) $
	exists and $\widehat{\theta}(\bm \lambda)$ is onto, then there exists $f\in I(\mathcal M)$ such that $\lim_{\substack{\bm z \to \lambda \\ \bm z \in \mathbb{B}_{d}}} f(\bm z)=1.$
\end{lemma}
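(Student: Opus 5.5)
The plan follows \cite[Lemma 5]{DEHS}, and has two parts. For the first assertion I would use the adjugate (Cramer's rule) trick. Let $\Theta^{\rm ad}=(c_{ij})$ be the adjugate matrix of $\Theta$. Its entries are polynomial expressions in the $\theta_{ij}$, so $c_{ij}\in\operatorname{Mult}(H_k)$, because $\operatorname{Mult}(H_k)$ is an algebra. The identity $\Theta\,\Theta^{\rm ad}=(\det\Theta) I_N$ then gives, for every $g\in H_k$ and every $l$,
\[
(\det\Theta)\, g\otimes d_l \,=\, \sum_{i}\Big(\sum_j \theta_{ij}c_{jl}g\Big)\otimes d_i \,=\, \sum_j M_\theta\big(c_{jl}g\otimes e_j\big).
\]
Each term on the right lies in $M_\theta(H_k\otimes\mathcal E)\subseteq\mathcal M$. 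Since $\mathcal M$ is closed and vectors $g\otimes d_l$ span a dense subspace of $H_k\otimes\mathcal D$ (in fact all of it, since $\mathcal D$ is finite dimensional), we get $\det\Theta\cdot(H_k\otimes\mathcal D)\subseteq\mathcal M$. That is, $\det\Theta\in I(\mathcal M)$.

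For the second assertion, suppose $\widehat\theta(\bm\lambda)$ exists and is onto $\mathcal D$. Since $\dim\mathcal D=N$, I can choose $e_1,\dots,e_N\in\mathcal E$ with $\widehat\theta(\bm\lambda)e_j=d_j$; for instance, take $e_j$ to be preimages of the basis vectors. Form $\Theta$ from these vectors. Each entry is $\theta_{ij}(\bm z)=\langle\theta(\bm z)e_j,d_i\rangle$, and this converges to $\langle\widehat\theta(\bm\lambda)e_j,d_i\rangle=\delta_{ij}$ as $\bm z\to\bm\lambda$ within $\mathbb B_d$. This uses only the existence of the limit of $\theta(\bm z)$, in the operator norm or even just weakly on these finitely many vectors. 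The determinant is a polynomial in finitely many entries, so $\det\Theta(\bm z)\to\det I_N=1$. Taking $f=\det\Theta$, which lies in $I(\mathcal M)$ by the first part, finishes the proof.

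The only point needing care is confirming that the entries $\theta_{ij}$ are genuinely multipliers of $H_k$, so that products and the adjugate stay in $\operatorname{Mult}(H_k)$. I would check this by writing $\theta_{ij}=P_i M_\theta J_j$, where $J_j:g\mapsto g\otimes e_j$ and $P_i$ is the compression to the $d_i$-coordinate; both are bounded maps commuting appropriately with multiplication by functions, so $M_{\theta_{ij}}$ is bounded. Everything else is finite-dimensional linear algebra. I do not expect a serious obstacle: the corona property is not needed here and enters only later, in Lemma~\ref{lemma_upper}.
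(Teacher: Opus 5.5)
Your proposal is correct and follows the paper's proof essentially step for step. You use the adjugate identity $\Theta\,\operatorname{adj}(\Theta)=(\det\Theta)I_N$ to write $(\det\Theta)\,g\otimes d_l$ as a finite sum of vectors $M_\theta(c_{jl}g\otimes e_j)\in\mathcal M$, and then choose $e_j$ with $\widehat\theta(\bm\lambda)e_j=d_j$, so that $\Theta(\bm z)\to I_N$ and $f=\det\Theta$ works. Your extra check that $\theta_{ij}=P_iM_\theta J_j$ is a bounded multiplier fills in a point the paper simply builds into the statement.
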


\begin{proof}
	Since the entries of \(\Theta\) are scalar multipliers, it follows that $ g:=\det \Theta $ is also a scalar multiplier. We claim that \(g\in I(\mathcal M)\). Let \(\operatorname{adj}(\Theta)\) denote the adjugate matrix of
	\(\Theta\). Then
	\[
	\Theta\,\operatorname{adj}(\Theta)
	\,=\,
	(\det\Theta) \cdot I_N
	=
	g \cdot I_N .
	\]
	Let \(\varphi_{jk}\) denote the \((j,k)\)-entry of \(\operatorname{adj}(\Theta)\). Each \(\varphi_{jk}\) is again a scalar multiplier. For every $1 \leq i,k \leq N,$
	\[
	\sum\limits_{j=1}^{N} \theta_{i,j} \cdot \varphi_{j,k} 
	\,=\, 
	g \, \delta_{i,k}. 
	\]
	Therefore, for every \(h\in H_k\) and every \(k=1,\ldots,N\), we have
	\begin{align*}
		\sum\limits_{j=1}^{N} (h \cdot \varphi_{j,k}) ( 1 \otimes \theta e_{j} ) 
		& \,=\,
		\sum\limits_{j=1}^{N} (h \cdot \varphi_{j,k})   \left( \sum\limits_{i=1}^{N} \theta_{i,j} d_{i} \right)
		\,=\,
		\sum\limits_{i=1}^{N} h \left( \sum\limits_{j=1}^{N} \varphi_{j,k} \cdot \theta_{i,j}  \right) d_{i}
		\,=\,
		\sum\limits_{i=1}^{N} h (g\, \delta_{i,k}) \,d_{i} \\
		& \,=\, (g \cdot h)\otimes d_k. 
	\end{align*}
	Now \(h  \cdot \varphi_{jk}\in H_k\), and hence $ (h \cdot \varphi_{jk}) \otimes e_j\in H_k \otimes \mathcal E.$
	Since \(M_\theta ( H_k \otimes \mathcal E )\subseteq \mathcal M\), we get
	\[
	M_\theta((h \cdot  \varphi_{jk}) \otimes e_j)
	\,=\,
	(h \cdot \varphi_{jk})\,\theta ( 1 \otimes e_j) 
	\,\in\, \mathcal M .
	\]
	Thus every term in the sum belongs to \(\mathcal M\), and hence $ (g \cdot h) \otimes d_k\in \mathcal M .$
	Since \(h\in H_k\) and \(k=1,\ldots,N\) were arbitrary, it follows that
	\[
	g ( H_k \otimes \mathcal D) \,\subseteq\, \mathcal M .
	\]
	Therefore $ g=\det\Theta\in I(\mathcal M).$
	
	\smallskip
	
	For the remaining part of the assertion fix $\bm \lambda \in \overline{\mathbb{B}}_{d}$
	such that
	\[
	\tilde\theta(\bm\lambda)
	=
	\lim_{\substack{\bm z\to \bm\lambda\\ \bm z\in\mathbb B_d}}
	\theta(\bm z)
	\]
	exists and is onto. Since \(\dim \mathcal D=N\), we may choose
	\(e_1,\ldots,e_N\in\mathcal E\) such that
	\[
	\tilde\theta(\bm\lambda)e_j \,=\, d_{j}, \qquad (j=1, \dots N).
	\]
	Let $\Theta = (\theta_{i,j})_{i,j}$ be the matrix formed as above with respect to the vectors $e_{1}, \dots, e_{N}$ chosen in this way. Then the limit $ \lim_{\substack{\bm z\to \bm\lambda\\ \bm z\in\mathbb B_d}}
	\Theta(\bm z)$ exists and is equal to $I_{N}.$ Hence $
	f \,=\, \det(\Theta)$ defines a function in $ I(\mathcal M)$ as in the statement of the Lemma.  
\end{proof}

\begin{lemma}\label{lemma_upper}
Let $k$ be a unitarily invariant CNP kernel on $\mathbb{B}_d$ which has the corona property, let $\mathcal{D}$ be finite-dimensional, and let 
$\mathcal{M} \in \operatorname{Lat}(M_z,H^2_s(\mathcal{D}))$ 
be closed. Suppose that 
$\theta : \mathbb{B}_d \to \mathcal{L}(\mathcal{E},\mathcal{D})$ 
is an inner multiplier such that $
\theta H^2_s(\mathcal{E}) \subseteq \mathcal{M}.$
Then
\[
\sigma(\bfM_z,H^2_s(\mathcal{D})/\mathcal{M})
\,\subseteq\,
E_{\theta}\cup F_{\theta}.
\]
\end{lemma}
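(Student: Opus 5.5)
We show the complement inclusion: if $\bm\lambda\in\overline{\mathbb B}_d$ lies outside $E_\theta\cup F_\theta$, then $\bm\lambda\notin\sigma(\bfM_z,H^2_s(\mathcal D)/\mathcal M)$. Points outside $\overline{\mathbb B}_d$ need no argument, since the compressed tuple is a $1/k$-contraction and its Taylor spectrum lies in $\overline{\mathbb B}_d$ by \cite[Lemma 5.3]{CH}. So fix $\bm\lambda\in\overline{\mathbb B}_d\setminus(E_\theta\cup F_\theta)$. Then $\widehat\theta(\bm\lambda)=\lim_{\bm z\to\bm\lambda}\theta(\bm z)$ exists and is onto $\mathcal D$.

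\textbf{Step 1 (a multiplier in the annihilator that tends to $1$).} Apply Lemma~\ref{lem:determinant-ideal-CNP} to obtain $f\in I(\mathcal M)$ with $\lim_{\bm z\to\bm\lambda}f(\bm z)=1$. Here $f=\det\Theta$ is a finite sum of products of entries of the multiplier $\theta$, so it is a scalar multiplier of $H_k$. Since $f(H_k\otimes\mathcal D)\subseteq\mathcal M$, the compression of $M_f\otimes I$ to the quotient vanishes: $f(\bfT)=0$, where $\bfT$ is the compression of $\bfM_z\otimes I_{\mathcal D}$ to $\mathcal M^\perp$.

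\textbf{Step 2 (apply the Clou\^atre--Timko upper bound).} Clou\^atre and Timko \cite{CT} show, for a unitarily invariant CNP kernel with the corona property, that the Taylor spectrum of such a compressed quotient tuple lies in
\[
\bigcap_{g\in I(\mathcal M)}\Big\{\bm\mu\in\overline{\mathbb B}_d:\ \liminf_{\bm z\to\bm\mu}|g(\bm z)|=0\Big\}.
\]
The corona property is what is used there: if $|g|\ge\delta$ on a neighbourhood of $\bm\mu$ in $\mathbb B_d$, one solves a corona problem with $g$ and the coordinate functions $z_j-\mu_j$. This gives a Koszul-type splitting, so the Koszul complex of $\bfT-\bm\mu$ is exact. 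The function $f$ from Step 1 satisfies $|f(\bm z)|\ge 1/2$ for $\bm z\in\mathbb B_d$ near $\bm\lambda$. Hence $\bm\lambda$ is not in this set, and therefore $\bm\lambda\notin\sigma(\bfT)$.

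\textbf{Main obstacle.} The hardest point is matching the hypotheses of the Clou\^atre--Timko result exactly. I must check which form they state it in: either with $I(\mathcal M)$, the multipliers annihilating the quotient, or with the annihilator ideal of $\bfT$ in $\operatorname{Mult}(H_k)$. These agree, because $g(\bfT)=P_{\mathcal M^\perp}(M_g\otimes I)|_{\mathcal M^\perp}$ by co-invariance of $\mathcal M^\perp$. I must also check whether their result covers boundary points $\bm\lambda\in\partial\mathbb B_d$, where "bounded away from zero near $\bm\lambda$" is the relevant condition.

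If their statement is only about the zero sets of annihilating functions, I would reprove the needed implication directly. Use the corona property for the tuple $(f,z_1-\lambda_1,\dots,z_d-\lambda_d)$ after localizing. Alternatively, use the Taylor spectral mapping theorem for the multiplier functional calculus: this gives $0=f(\bfT)$ with $f$ not vanishing near $\bm\lambda$, so $f(\bm\lambda)\notin\sigma(f(\bfT))=\{0\}$ in the appropriate extended sense, and hence $\bm\lambda\notin\sigma(\bfT)$.
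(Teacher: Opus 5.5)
Your proposal is correct and is essentially the paper's proof. For $\bm\lambda\in\overline{\mathbb B}_d\setminus(E_\theta\cup F_\theta)$ the limit $\widehat\theta(\bm\lambda)$ exists and is onto, and Lemma~\ref{lem:determinant-ideal-CNP} gives $f=\det\Theta\in I(\mathcal M)$ with $f(\bm z)\to 1$. The Clou\^atre--Timko inclusion $\sigma(\bfM_z,H^2_s(\mathcal D)/\mathcal M)\subseteq\bigcap_{g\in I(\mathcal M)}\{\bm\mu\in\overline{\mathbb B}_d:\liminf_{\bm z\to\bm\mu}|g(\bm z)|=0\}$, which the paper cites from \cite[Corollary 3.14]{CT} in exactly this form (boundary points included), then excludes $\bm\lambda$, so your fallback spectral-mapping route is not needed.
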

\begin{proof}
Let $I(\mathcal M)$ be the ideal appearing in Lemma \ref{lem:determinant-ideal-CNP}. For $f\in I(\mathcal M)$, set
\[
AZ(f)
\,=\,
\left\{
\lambda \in \overline{\mathbb{B}}_d \  :\ 
\liminf_{z \to \lambda} |f(z)| = 0
\right\}.
\]
Then by \cite[Corollary 3.14]{CT}, we have
\[
\sigma(\bfM_z,H^2_s(\mathcal D)/\mathcal M)
\,\subseteq \,
\bigcap_{f\in I(\mathcal M)} AZ(f).
\]
Next, the inclusion 
\[
\bigcap_{f\in I(\mathcal M)} AZ(f) 
\, \subseteq\, 
E_{\theta} \cup F_{\theta}
\]
follows from Lemma~\ref{lem:determinant-ideal-CNP}.  Indeed, suppose $\lambda \in \overline{\mathbb{B}}_d$ is such that
$ \lim_{\substack{z \to \lambda \\ z \in \mathbb{B}_d}} \theta(z) $
exists and is surjective. Then, by Lemma~\ref{lem:determinant-ideal-CNP}, there exists a function $f \in I(\mathcal{M})$ such that
	$\lim_{\substack{z \to \lambda \\ z \in \mathbb{B}_d}} f(z) = 1.$
	Hence $\lambda \notin \bigcap_{f\in I(\cM)}AZ(f)$.
\end{proof}

\end{document}